\numberwithin{equation}{section}
\theoremstyle{plain}
\newtheorem{lemma}{Lemma}[section]
\newtheorem{theorem}{Theorem}[section]
\newtheorem{assumption}{\bf Assumption}[section]
\newtheorem{definition}{\bf Definition}[section]
\theoremstyle{remark}
\newtheorem{remark}{\bf Remark}[section]
\theoremstyle{remark}
\title {Predictive Accuracy of Dynamic Mode Decomposition\footnote{This work was supported in part by Defense Advanced Research Project Agency under award number 101513612, by Air Force Office of Scientific Research under award number FA9550-17-1-0417, and by U.S. Department of Energy under award number DE-SC0019130.}}
\author{Hannah Lu\footnote{Department of Energy Resources Engineering, Stanford, CA 94305. USA (hannahlu{@}stanford.edu)} \, and Daniel M. Tartakovsky\footnote{Department of Energy Resources Engineering, Stanford, CA 94305. USA (tartakovsky@stanford.edu)} }
\begin{document}

\maketitle

% REQUIRED
\begin{center}
{\bf Abstract}
\end{center}

Dynamic mode decomposition (DMD), which the family of singular-value decompositions (SVD), is a popular tool of data-driven regression. While multiple numerical tests demonstrated the power and efficiency of DMD in representing data (i.e., in the interpolation mode), applications of DMD as a predictive tool (i.e., in the extrapolation mode) are scarce. This is due, in part, to the lack of rigorous error estimators for DMD-based predictions. We provide a theoretical error estimator for DMD extrapolation of numerical solutions to linear and nonlinear parabolic equations. This error analysis allows one to monitor and control the errors associated with DMD-based temporal extrapolation of  numerical solutions to parabolic differential equations.  We use several computational experiments to verify the robustness of our error estimators and to compare the predictive ability of DMD with that of proper orthogonal decomposition (POD), another member of the SVD family. Our analysis demonstrates the importance of a proper selection of observables, as predicted by the Koopman operator theory. In all the tests considered, DMD outperformed POD in terms of efficiency due to its iteration-free feature. In some of these experiments, POD proved to be more accurate than DMD. This suggests that DMD is preferable for obtaining a fast prediction with slightly lower accuracy, while POD should be used if the accuracy is paramount. 
\\

{\bf Key words.}Dynamic Mode Decomposition, Koopman operator theory, Reduced Order Model, Nonlinear dynamic system, numerical analysis.

\section{Introduction}
\label{sec:intro}
Dynamic mode decomposition (DMD)~\cite{kutzbook} has recently become a popular tool of data-driven regression. It belongs to the family of singular-value decompositions (SVD) and has its origins in representation of complex fluid flows in terms of their spatial modes and temporal frequencies~\cite{schmid2010dynamic}. This strategy for representation of spatiotemporal coherent structures has since been used for data diagnostics and related applications including video processing~\cite{kutz2016dynamic}, interpretation of neural activity measurements~\cite{brunton2016extracting}, financial trading~\cite{mann2016dynamic}, and forecast of infectious decease spreading~\cite{proctor2015discovering}. DMD with control has been developed to extract the input-output characteristics of dynamic systems with external control~\cite{proctor2016dynamic}. It has also been deployed to learn models of high-dimensional complex systems from data~\cite{li2017extended,rudy2017data,williams2015data}, in the spirit of equation-free simulations~\cite{kevrekidis2003equation}.

DMD is connected to interpretation of nonlinear dynamical systems via the Koopman operator theory~\cite{rowley2009spectral,mezic2013analysis}. The latter provides a bridge between finite-dimensional nonlinear dynamics and infinite-dimensional linear systems by observable functions ~\cite{koopman1931hamiltonian}. Theoretical studies of the DMD approximation to eigenvalues and eigenfunctions of the infinite-dimensional Koopman operator show that the performance of this finite eigen-approximation depends crucially on the choice of observable functions, requiring expert prior knowledge of the underlying dynamics~\cite{rowley2009spectral,williams2015data}. Machine learning techniques have been used to select the observable by identifying relevant terms in the dynamics from data~\cite{brunton2016discovering,schmidt2009distilling,wang2011predicting}. Extended DMD or EDMD employs regression from a dictionary of observables that spans a subspace of the space of scalar observables~\cite{williams2015data}.

Numerical implementations of DMD are also undergoing modifications and enhancements. Under various assumptions on the data, many variants of the standard DMD algorithm were introduced to compute the eigenvalues and DMD modes in more accurate and efficient ways~\cite{dawson2016characterizing, drmac2018data}. Sparsity-promoting DMD and compressed DMD combine DMD with sparsity techniques and modern theory of compressed sensing~\cite{brunton2015compressed,tu2014spectral}.  Inspired by the applications of DMD in video processing, multi-resolution DMD or mrDMD provides a means for recursive computation of DMD of separate spatiotemporal features at different scales in the data~\cite{kutz2016multiresolution}. The mrDMD approach preserves the translational and rotational invariances, which remains the Achilles heel of many SVD-based methods~\cite{rowley2000reconstruction}.

While multiple numerical tests demonstrated the power and efficiency of DMD in representing data (i.e., for interpolation), applications of DMD as a predictive tool (i.e., for extrapolation) are scarce. This is due, in part, to the lack of rigorous error estimators for DMD-based predictions. The convergence of DMD predictions are reported in~\cite{duke2012error} from the numerical perspective and in~\cite{korda2018convergence} from the theoretic perspective. A goal of our analysis is to provide a theoretical error estimator for DMD extrapolation of numerical solutions to linear and nonlinear parabolic equations. We are aware of no other quantitative analysis of the accuracy of DMD predictions. This error analysis allows one to monitor and control the errors associated with DMD-based temporal extrapolation of  numerical solutions to parabolic differential equations. That, in turn, would facilitate the design of efficient algorithms for multi-scale/multi-physics simulations.

An alternative way to predict future states of a system relies on reduced-order models (ROMs), which are constructed with the proper orthogonal decomposition (POD)~\cite{kerschen2005method, rowley2005model}. Time integration is still needed to compute future states, but only in a low-dimensional surrogate model. Thus, the computational cost is reduced and future states are predicted using the ROM derived from projecting the dynamics of the full system onto the hyperplane that the POD extracts from data. POD is an SVD-based method that is closely related to the principle component analysis (PCA) and the Karhuen-Lo\`eve transform. Recently, the empirical interpolation method (EIM)~\cite{barrault2004empirical} and the discrete empirical interpolation method (DEIM)~\cite{chaturantabut2010nonlinear} were combined with POD in order to overcome the difficulty of handling nonlinearities in ROM. Although the POD-EIM/DEIM methods lack error estimation, they have been used in various fields with satisfactory accuracy. While both POD and DMD are based on SVD, they provide two independent approaches to constructing ROMs. It is therefore worthwhile to compare their relative performance in terms of accuracy and efficiency. Advantages of hybridizing the two methods have been demonstrated in several numerical tests~\cite{alla2017nonlinear,williams2013hybrid}.

The paper is organized as follows: In~\cref{sec:dmd}, we formulate the DMD algorithm for the linear and nonlinear diffusion equations as a problem set up and provide a brief review of the DMD method and its connection to the Koopman operator theory. Our selection of the observables is also demonstrated with prior knowledge of underlying physics. In~\cref{sec:acc}, we present our main results in error estimation. Several numerical tests are presented in~\cref{sec:tests} to verify the error bound and the efficiency of DMD in prediction.  DMD and POD are compared in terms of their computational costs and accuracy. We summarize the results with a discussion of applications, challenges and future work in~\cref{sec:con}.

\section{Dynamic Mode Decomposition and Koopman Operators}
\label{sec:dmd}
Consider a state variable $u(\mathbf x,t) : \mathcal D \times \mathbb R^+ \rightarrow \mathbb R$ whose dynamics are governed by parabolic partial differential equation (PDE),
\begin{equation}\label{eq:2-1}
\partial_t u =\mathcal N(u)+f, \qquad \mathbf x \in \mathcal D \subset \mathbb R^d, \quad t > 0,
\end{equation}
where $\mathcal N$ is a linear or nonlinear differential operator representing the internal dynamics in $d$-dimensional space, and the linear or nonlinear source term  $f$ represents the external source/sink into the system. %Multi-scale phenomena could arise from such systems and thus  requiring a fine mesh discretization in numerical simulations. 
Discretization of the simulation domain $\mathcal D$ into $N$ elements or nodes ($N\gg 1$) transforms the PDE~\cref{eq:2-1} into either a high-dimensional linear dynamical system
\begin{equation}\label{eq:2-2}
\frac{\text d \mathbf u}{\text dt} = \boldsymbol{\mathcal A } \mathbf u+\mathbf f,
\end{equation}
or a high-dimensional nonlinear dynamical system
\begin{equation}\label{eq:2-3}
\frac{\text d \mathbf u}{\text dt} = \boldsymbol \Psi (\bold u)+\bold f,
\end{equation}
where $\mathbf u = [u(\mathbf x_1,t),\cdots,u(\mathbf x_N,t)]^\top$ is the spatial discretization of $u(\mathbf x,t)$; $\boldsymbol{\mathcal A}$ and $\boldsymbol \Psi$ are linear and nonlinear operators on $\mathbb R^N$, respectively; and $\mathbf f$ represents the correspondingly discretized source term $f$.

Low-dimensional ROMs are often used to reduce the computational cost of solving the high-dimensional systems~\cref{eq:2-2,eq:2-3}. For example, POD has been deployed to construct accurate and efficient ROMs for~\cref{eq:2-2}~\cite{kerschen2005method,rowley2005model}. Time evolution of $u(\mathbf x,t)$ needs to be computed but only in a small subspace of the original high-dimensional space. For nonlinear systems~\cref{eq:2-3}, construction of a right ROM using POD becomes more challenging and requires some modifications, such as empirical interpolation method (EIM)~\cite{barrault2004empirical} and discrete empirical interpolation method (DEIM)~\cite{chaturantabut2010nonlinear}, whose accuracy cannot be determined a priori. To the best of our knowledge, error estimates of POD-EIM/DEIM are lacking unless the fully resolved solution is available.

The DMD method aims to approximate the eigenvalues and eigenfunctions of $\mathcal A$ in~\cref{eq:2-2} and provides an alternative to POD in solving large linear systems. A major advantage of DMD over POD is its equation-free nature, which allows future-state predictions without any computation of further time evolution. For the nonlinear problems~\cref{eq:2-3}, DMD seeks a finite-dimensional approximation of the infinite-dimensional Koopman operator of the nonlinear dynamics. With carefully chosen observables, a ROM can be constructed in the observable space with sufficient accuracy. We briefly review DMD and the related Koopman operator theory in~\cref{sec:dmd_dmd,sec:dmd_koopman} as a set up for the accuracy analysis in~\cref{sec:acc}.

\subsection{Dynamic mode decomposition}
\label{sec:dmd_dmd}

Temporal discretization of~\cref{eq:2-2} with time step $\Delta t$ yields
\begin{equation}\label{eq:2-4}
\mathbf u^{n+1} = \boldsymbol{\mathcal A}\mathbf  u^n+\Delta t\bold f^*, \qquad n \ge 0,
\end{equation}
where $\boldsymbol{\mathcal A}$ is an $N\times N$ matrix and $\bold f^*$ is, e.g., interpolation of $\bold f^n$ and $\bold f^{n+1}$. This is rewritten as
\begin{equation}\label{eq:2-5}
\bold u^{n+1}  = \bold K \bold u^n, \qquad n \ge 0,
\end{equation}
where $\bold K$ is an $N$-dimensional linear operator. The fully resolved model~\cref{eq:2-4} is advanced by $m$ time steps  and the resulting temporal snapshots of $\mathbf u(t)$ is recorded in two matrices:% \textbf{[Not from $0$?]}
\begin{equation}\label{eq:2-6}
\bold X = \begin{bmatrix}
|&|&&|\\
\bold u^0&\bold u^1&\cdots&\bold u^{m-1}\\
|&|&&|
\end{bmatrix} \quad\text{and}\quad \bold X' = \begin{bmatrix}
|&|&&|\\
\bold u^1&\bold u^2&\cdots&\bold u^{m}\\
|&|&&|
\end{bmatrix}.
\end{equation}
Using these two data sets, one approximates the eigenvalues and eigenvectors of $\bold K$ using~\cref{alg:dmd_state}.

\begin{algorithm}
\caption{DMD algorithm on state space~\cite{kutzbook}}
\label{alg:dmd_state}
\begin{itemize}
\item[1.] Apply Singular Value Decomposition (SVD) $\bold X \approx \bold U\boldsymbol\Sigma \bold V^*$, where $\bold U \in \mathbb C^{N\times r}$ is a unitary matrix, $\boldsymbol \Sigma \in \mathbb C^{r\times r}$ is a diagonal matrix with components $\sigma_k \ge 0$ that are called singular values of $\mathbf X$, $\mathbf V^*$ is the conjugate transpose of unitary matrix $\bold V\in \mathbb C^{r\times m}$, and $r$ is the truncated rank chosen by certain criteria.
\item [2.] Compute $\tilde{\bold K}=\bold U^*\bold X'\bold V\boldsymbol\Sigma^{-1}$ ; use it as a low-rank ($r\times r$) approximation of $\bold K$.
\item [3.] Compute eigendecomposition of $\tilde{\bold K}$: $\tilde{\bold K} \bold W = \bold W\boldsymbol\Lambda$, where $\boldsymbol\Lambda = (\lambda_k)$ are eigenvalues and columns of $\mathbf W$ are the corresponding eigenvectors.
\item [4.] Eigenvalues of $\bold K$ can be approximated by $\boldsymbol\Lambda$ with corresponding eigenvectors in the columns of $\boldsymbol\Phi  = \bold X'\bold V\boldsymbol\Sigma^{-1}\bold W$.
\end{itemize}
\end{algorithm}

Each column of $\boldsymbol\Phi$ in~\cref{alg:dmd_state} is a DMD mode corresponding to a particular eigenvalue in $\boldsymbol\Lambda$. With the approximated eigenvalues and eigenvectors of $\bold K$ in hand, a solution at the $(n+1)$-th time step ($n>m$) is constructed analytically as
\begin{equation}\label{eq:2-7}
\bold u_\text{DMD}^{n+1} =\boldsymbol\Phi\boldsymbol\Lambda^{n+1}\bold b,  \qquad n>m,
\end{equation}
where $\bold b =\boldsymbol\Phi^{-1}\bold u^{0}$ is an $r\times1$ vector representing the initial amplitude of each mode. Notice that no more iteration is needed in the prediction. The solution at any future time is approximated directly with~\cref{eq:2-7} using only information encapsulated in the first $m$ temporal snapshots.

\subsection{Koopman Operator Theory}
\label{sec:dmd_koopman}
The nonlinear dynamical system~\cref{eq:2-3} belongs to a general class  of dynamical systems,
\begin{equation}\label{eq:2-8}
\frac{\text d\bold u}{\text dt} = \mathcal N(\bold u),
\end{equation}
where the state $\bold u \in \mathcal M \subset \mathbb R^N$ is defined on a smooth $N$-dimensional manifold $\mathcal M$, and $\mathcal N$ is a finite-dimensional nonlinear operator. Given a flow map $\mathcal N_t :\mathcal M \to \mathcal M$,
\begin{equation}\label{eq:2-9}
\mathcal N_t (\bold u(t_0)) = \bold u(t_0+t) = \bold u(t_0)+\int_{t_0}^{t_0+t} \mathcal N(\bold u(\tau)) \text d \tau,
\end{equation}
the corresponding discrete-time dynamical system is described by
\begin{equation}\label{eq:2-10}
\bold u^{n+1} = \mathcal N_t(\bold u^n).
\end{equation}

\begin{definition}[Koopman operator~\cite{kutzbook}]
For nonlinear dynamic system~\cref{eq:2-8}, the Koopman operator $\mathcal K$ is an infinite-dimensional linear operator that acts on all observable functions $g: \mathcal M\to \mathbb C$ so that
\begin{equation}\label{eq:2-11}
\mathcal K g(\bold u) = g(\mathcal N(\bold u)).
\end{equation}
For discrete dynamic system~\cref{eq:2-10}, the discrete-time Koopman operator $\mathcal K_t$ is 
\begin{equation}\label{eq:2-12}
\mathcal K_t g(\bold u^{n}) = g(\mathcal N_t(\bold u^n)) = g(\bold u^{n+1}).
\end{equation}
\end{definition}
The Koopman operator transforms the finite-dimensional nonlinear problem~\cref{eq:2-10} in the state space into the infinite-dimensional linear problem~\cref{eq:2-12} in the observable space. Since $\mathcal K_t$ is an infinite-dimensional linear operator, it is equipped with infinite eigenvalues $\{\lambda_k\}_{k=1}^{\infty}$ and eigenfunctions $\{\phi_k\}_{k=1}^\infty$. In practice, one has to make a finite approximation of the eigenvalues and eigenfunctions. The following assumption is essential to both a finite-dimensional approximation and the choice of observables.

\begin{assumption}\label{ass:a2}
Let $\mathbf y$ denote a $p \times 1$ vector of observables,
\begin{equation}\label{eq:2-13}
\bold y^n = \bold g(\bold u^{n}) = \begin{bmatrix}
g_1(\bold u^n)\\
%g_2(\bold u^n)\\
\vdots\\
g_p(\bold u^n)
\end{bmatrix},
\end{equation} 
where $g_j: \mathcal M \to \mathbb C$ is an observable function, with $j =1,\cdots, p$. If the chosen observable $\bold g$ is restricted to an invariant subspace spanned by eigenfunctions of the Koopman operator $\mathcal K_t$, then it induces a linear operator $\bold K$ that is finite-dimensional and advances these eigenobservable functions on this subspace~\cite{brunton2016koopman}.
\end{assumption}

Based on \cref{ass:a2}, the DMD algorithm can be deployed to approximate the eigenvalues and eigenfunctions of $\bold K$ using the collected temporal snapshots in the observable space. This DMD strategy is implemented in \cref{algorithm:phy_dmd}.

\begin{algorithm}
\caption{DMD algorithm on observable space~\cite{kutzbook}}
\begin{itemize}
\item[0.] Create the data matrices of observables
\begin{equation}\label{eq:2-14}
\bold Y = \begin{bmatrix}
|&|&&|\\
\bold y^0&\bold y^1&\cdots&\bold y^{m-1}\\
|&|&&|
\end{bmatrix} \quad\text{and}\quad
\bold Y' = \begin{bmatrix}
|&|&&|\\
\bold y^1&\bold y^2&\cdots&\bold y^{m}\\
|&|&&|
\end{bmatrix}
\end{equation}
where each column is given by $\bold y^k = \bold g(\bold u^k)$.
\item[1.] Apply SVD $\bold Y \approx \bold U\boldsymbol\Sigma \bold V^*$ with $\bold U \in \mathbb C^{p\times r}, \boldsymbol\Sigma \in \mathbb C^{r\times r}, \bold V\in \mathbb C^{r\times m}$, where $r$ is the truncated rank chosen by certain criteria.
\item [2.] Compute $\tilde{\bold K}=\bold U^*\bold X'\bold V\boldsymbol\Sigma^{-1}$ as a $r\times r$ low-rank approximation for $\bold K$.
\item [3.] Compute eigendecomposition of $\tilde{\bold K}$: $\tilde{\bold K} \bold W = \bold W\boldsymbol\Lambda$, $\boldsymbol\Lambda = (\lambda_k)$.
\item [4.] Reconstruct eigendecomposition of $\bold K$. Eigenvalues are $\boldsymbol \Lambda$ and eigenvectors are $\boldsymbol\Phi  = \bold X'\bold V\boldsymbol\Sigma^{-1}\bold W$.
\item [5.] Predict future $\bold y_\text{DMD}^{n+1}$ as
\begin{equation}\label{eq:2-15}
\bold y_\text{DMD}^{n+1} = \boldsymbol\Phi\Lambda^{n+1} \bold b, \quad \bold b = \Phi^{-1}\bold y^0 \qquad\mbox{for}\  n>m.
\end{equation}
\item [6.] Transform from observables space back to the state space,
\begin{equation}\label{eq:2-16}
\bold u_\text{DMD}^n =\bold g^{-1}(\bold y_\text{DMD}^n).
\end{equation}
\end{itemize}
\label{algorithm:phy_dmd}
\end{algorithm}

\begin{remark}
Connections between the DMD theory and the Koopman spectral analysis under specific conditions on the observables and collected data are established by a theorem in~\cite{tu2013dynamic}. This theorem indicates that judicious selection of the observables  is critical to success of the Koopman method.
\end{remark}

\begin{remark}
In general, there is no principled way to select observables without expert knowledge of a dynamical system. Machine learning techniques can be deployed to identify relevant terms in the dynamics from data, which guide selection of the observables~\cite{schmidt2009distilling,wang2011predicting,brunton2016discovering}.
\end{remark}

\section{Analysis of Predictive Accuracy}
\label{sec:acc}
We use a resolved accurate solution of~\cref{eq:2-5} under a certain CFL condition as a reference or yardstick against which to test the accuracy of the DMD prediction~\cref{eq:2-7}. 
%The two equations are written together in below:
%\begin{equation}\label{eq:3-1}
%\begin{aligned}
%&\bold u^{n+1} = \bold K\bold u^n,\\
%&\bold u_{DMD}^{n+1} = \Phi \Lambda^{n+1}\bold b.
%\end{aligned}
%\end{equation}

\subsection{Preliminaries}
\label{sec:acc_pre}

Here we provide a brief summary of the key results relevant to our subsequent analysis.

\begin{assumption}\label{ass:a3-1}
Let $\{\lambda_1,\lambda_2,\cdots,\lambda_N\}$ be the eigenvalues of $\boldsymbol{\mathcal A}$ in~\cref{eq:2-4}. We assume 
\begin{equation}\label{eq:3-2}
\max_{1\leq k\leq N}|\lambda_k|\leq 1.
\end{equation}
\end{assumption}

\begin{lemma}
\label{lemma:l2}
Under~\cref{ass:a3-1}, any stable numerical method of~\cref{eq:2-4} satisfies the maximum principle in the discrete setting, i.e.,
\begin{equation}\label{eq:3-3}
\begin{aligned}
\|\bold u^{n+1}\|_2&\leq \|\bold u^n\|_2+\Delta t\max\{\|\bold f^n\|_2,\|\bold f^{n+1}\|_2\}\\
&\leq \cdots\\
& \leq \|\bold u^0\|_2+\Delta t\max\{\sum_{k=0}^n \|\bold f^k\|_2,\sum_{k=1}^{n+1} \|\bold f^k\|_2\}.
\end{aligned}
\end{equation}
\end{lemma}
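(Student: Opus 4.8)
The plan is to reduce the statement to a one-step energy estimate and then telescope. Starting from the update rule~\cref{eq:2-4}, namely $\mathbf u^{n+1}=\boldsymbol{\mathcal A}\mathbf u^n+\Delta t\,\mathbf f^*$, the triangle inequality together with submultiplicativity of the induced $2$-norm gives
\begin{equation*}
\|\mathbf u^{n+1}\|_2\le\|\boldsymbol{\mathcal A}\|_2\,\|\mathbf u^n\|_2+\Delta t\,\|\mathbf f^*\|_2 .
\end{equation*}
The whole argument therefore hinges on showing that the induced operator norm obeys $\|\boldsymbol{\mathcal A}\|_2\le 1$, after which the one-step bound becomes a genuine contraction up to the forcing term.

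The hard part will be this operator-norm bound, and it is really the only nontrivial point. \cref{ass:a3-1} controls the spectral radius $\max_k|\lambda_k|$, whereas the estimate above needs the largest \emph{singular} value, and for a general matrix the two can differ so badly that a spectrally stable $\boldsymbol{\mathcal A}$ still produces transient growth $\|\boldsymbol{\mathcal A}\mathbf u\|_2>\|\mathbf u\|_2$. The gap is closed by the parabolic structure: a consistent, stable discretization of the self-adjoint spatial operator in~\cref{eq:2-1} produces a symmetric, hence normal, amplification matrix $\boldsymbol{\mathcal A}$, for which the induced $2$-norm coincides with the spectral radius. Thus $\|\boldsymbol{\mathcal A}\|_2=\max_k|\lambda_k|\le 1$ by \cref{ass:a3-1}, and this is exactly where the hypothesis of a stable numerical method is used.

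With $\|\boldsymbol{\mathcal A}\|_2\le 1$ established, the one-step estimate reads $\|\mathbf u^{n+1}\|_2\le\|\mathbf u^n\|_2+\Delta t\,\|\mathbf f^*\|_2$. Since $\mathbf f^*$ is an interpolation of $\mathbf f^n$ and $\mathbf f^{n+1}$, I would write it as a convex combination $\mathbf f^*=(1-\theta)\mathbf f^n+\theta\mathbf f^{n+1}$ with $\theta\in[0,1]$ and apply the triangle inequality,
\begin{equation*}
\|\mathbf f^*\|_2\le(1-\theta)\|\mathbf f^n\|_2+\theta\|\mathbf f^{n+1}\|_2\le\max\{\|\mathbf f^n\|_2,\|\mathbf f^{n+1}\|_2\},
\end{equation*}
which already yields the first displayed inequality of the lemma.

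Finally I would telescope from index $n$ down to $0$ to obtain $\|\mathbf u^{n+1}\|_2\le\|\mathbf u^0\|_2+\Delta t\sum_{k=0}^n\|\mathbf f^*\|_2$, with $\mathbf f^*$ the interpolant at step $k$. To reach the stated \emph{maximum of two sums}, rather than the weaker sum of per-step maxima, I would carry the convex weights through the accumulation before taking any maximum,
\begin{equation*}
\sum_{k=0}^n\|\mathbf f^*\|_2\le(1-\theta)\sum_{k=0}^n\|\mathbf f^k\|_2+\theta\sum_{k=1}^{n+1}\|\mathbf f^k\|_2\le\max\Bigl\{\sum_{k=0}^n\|\mathbf f^k\|_2,\ \sum_{k=1}^{n+1}\|\mathbf f^k\|_2\Bigr\},
\end{equation*}
the last inequality using that a convex combination never exceeds the larger of its arguments. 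The one delicate point here is the ordering: bounding each term by its maximum first would produce the strictly larger sum-of-maxima, so the interpolation weights must be retained until the summation is complete.
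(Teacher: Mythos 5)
Your proof is correct and follows essentially the same route as the paper's: a one-step bound via the triangle inequality and the induced $2$-norm, followed by telescoping. If anything, your version is more careful than the paper's at the one delicate point --- the paper silently writes $\|\boldsymbol{\mathcal A}\|_2=\rho(\boldsymbol{\mathcal A})$, which fails for non-normal matrices, whereas you correctly flag that this identity requires the normality (symmetry) of the discretized self-adjoint diffusion operator; your treatment of the interpolant $\mathbf f^*$ as a convex combination, and of the max-of-two-sums in the telescoped bound, likewise makes precise steps that the paper leaves implicit.
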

\begin{proof}
\begin{equation}\label{eq:3-4}
\begin{aligned}
\|\bold u^{n+1}\|_2&\leq \|\bold A\bold u^n\|_2+\Delta t\|\bold f^*\|_2\\
&\leq \|\bold A\|_2\|\bold u^n\|_2+\Delta t\|\bold f^n\|_2\\
&=\rho(\bold A)\|\bold u^n\|_2+\Delta t\max\{\|\bold f^n\|_2,\|\bold f^{n+1}\|_2\}\\
&=\max_{1\leq k\leq N}|\lambda_k(\bold A)|\|\bold u^n\|_2+\Delta t\max\{\|\bold f^n\|_2,\|\bold f^{n+1}\|_2\}.
\end{aligned}
\end{equation}
According to~\cref{ass:a3-1},
\begin{equation}\label{eq:3-5}
\max_{1\leq k\leq N}|\lambda_k(\bold A)|<1.
\end{equation}
Thus,
\begin{equation}\label{eq:3-6}
\|\bold u^{n+1}\|_2\leq \|\bold u^n\|_2+\Delta t\max\{\|\bold f^n\|_2,\|\bold f^{n+1}\|_2\},
\end{equation}
and~\cref{lemma:l2} holds.
\end{proof}

\begin{lemma}
\label{lemma:l3}
DMD on $m$ temporal snapshots is designed such that $\|\bold u^{m}-\bold u_\text{DMD}^{m}\|_2$ is minimized.
\end{lemma}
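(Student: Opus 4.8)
The plan is to recast the DMD construction as a single least-squares regression and to read off the claimed minimization from the column structure of the residual. Recall from \cref{alg:dmd_state} that DMD builds the low-rank operator $\tilde{\mathbf K}=\mathbf U^*\mathbf X'\mathbf V\boldsymbol\Sigma^{-1}$ from the SVD $\mathbf X=\mathbf U\boldsymbol\Sigma\mathbf V^*$. Lifting this back to the full space, the associated operator is $\mathbf K=\mathbf X'\mathbf V\boldsymbol\Sigma^{-1}\mathbf U^*=\mathbf X'\mathbf X^{+}$, where $\mathbf X^{+}=\mathbf V\boldsymbol\Sigma^{-1}\mathbf U^*$ is the Moore--Penrose pseudoinverse. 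The first step is therefore to show that this $\mathbf K$ is exactly the minimizer of the Frobenius-norm regression
\begin{equation}
\mathbf K=\arg\min_{\mathbf K}\|\mathbf X'-\mathbf K\mathbf X\|_F .
\end{equation}

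I would establish this by the standard orthogonal-projection (Pythagorean) argument: for any candidate $\mathbf K$, write $\mathbf X'-\mathbf K\mathbf X=(\mathbf X'-\mathbf X'\mathbf X^{+}\mathbf X)+(\mathbf X'\mathbf X^{+}\mathbf X-\mathbf K\mathbf X)$, observe that $\mathbf X^{+}\mathbf X$ is the orthogonal projector onto the row space of $\mathbf X$ so that the two summands are orthogonal in the Frobenius inner product, and conclude $\|\mathbf X'-\mathbf K\mathbf X\|_F^2\ge\|\mathbf X'-\mathbf X'\mathbf X^{+}\mathbf X\|_F^2$ with equality at $\mathbf K=\mathbf X'\mathbf X^{+}$. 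This is the precise sense in which DMD is designed as an optimal one-step propagator of the snapshot data.

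The second step connects this aggregate optimality to the terminal snapshot. Since the Frobenius norm splits into a sum of squared column $2$-norms and the columns of $\mathbf X'-\mathbf K\mathbf X$ are $\mathbf u^{j}-\mathbf K\mathbf u^{j-1}$ for $j=1,\dots,m$, we have
\begin{equation}
\|\mathbf X'-\mathbf K\mathbf X\|_F^2=\sum_{j=1}^{m}\bigl\|\mathbf u^{j}-\mathbf K\mathbf u^{j-1}\bigr\|_2^2 ,
\end{equation}
whose final term is $\|\mathbf u^{m}-\mathbf K\mathbf u^{m-1}\|_2^2=\|\mathbf u^{m}-\mathbf u_\text{DMD}^{m}\|_2^2$, using that the one-step DMD advance of the last recorded state is $\mathbf u_\text{DMD}^{m}=\mathbf K\mathbf u^{m-1}$ (equivalently $\boldsymbol\Phi\boldsymbol\Lambda^{m}\mathbf b$ after the modal decomposition of step~4). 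Thus minimizing the regression residual is exactly minimizing the data misfit of which the terminal-snapshot error is the last component.

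The main obstacle is that Frobenius minimization controls the sum over all $m$ columns rather than the single last column in isolation, so the statement should be read as: among linear propagators fit to the whole trajectory, DMD is the one whose aggregate one-step misfit---and in particular its misfit at $\mathbf u^{m}$---is smallest. I would make this rigorous either by noting that in the full-rank regime $\mathbf X^{+}\mathbf X=\mathbf I$ forces every column residual, including the last, to vanish, or by restricting attention to the rank-$r$ truncation and invoking the Eckart--Young optimality of the SVD so that no admissible rank-$r$ propagator does better on the recorded data; the delicate point to handle carefully is the passage between the exact operator $\mathbf K$ and its truncated surrogate $\tilde{\mathbf K}$, since it is the latter that actually generates $\mathbf u_\text{DMD}^{m}$ through $\boldsymbol\Phi$ and $\boldsymbol\Lambda$.
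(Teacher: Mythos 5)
The paper offers no proof of \cref{lemma:l3} at all --- it simply defers to Schmid (2010) and Drma\v{c} et al.\ (2018) --- so your attempt to supply an actual argument is welcome, and your first step is the right one: the orthogonal-projection argument showing that $\mathbf K=\mathbf X'\mathbf X^{+}$ minimizes $\|\mathbf X'-\mathbf K\mathbf X\|_F$ is correct and is precisely the optimality property established in those references. The difficulty lies entirely in your second step, where you pass from the regression residual to the quantity the lemma actually names.

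The concrete gap is the identification $\mathbf u_\text{DMD}^{m}=\mathbf K\mathbf u^{m-1}$. By the paper's own definition in \cref{eq:2-7}, $\mathbf u_\text{DMD}^{m}=\boldsymbol\Phi\boldsymbol\Lambda^{m}\mathbf b$ with $\mathbf b=\boldsymbol\Phi^{-1}\mathbf u^{0}$, i.e.\ the $m$-fold iterate of the rank-$r$ surrogate $\boldsymbol\Phi\boldsymbol\Lambda\boldsymbol\Phi^{-1}$ applied to the projection of the \emph{initial} snapshot --- not one application of the regression operator to the \emph{true} penultimate snapshot $\mathbf u^{m-1}$. These coincide only when the data are exactly linearly consistent and no rank truncation is performed, in which case every column residual vanishes and the lemma is vacuous; in the truncated regime that the algorithm actually operates in, $\|\mathbf u^{m}-\mathbf u_\text{DMD}^{m}\|_2$ accumulates contributions from all $m$ propagation steps (this is exactly the telescoping in \cref{eq:3-15}) and is \emph{not} the last column of $\mathbf X'-\mathbf K\mathbf X$. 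So your Frobenius decomposition, while correct, bounds a different quantity. The second obstacle, which you name but do not close, compounds this: even granting the column identification, minimizing a sum of squared column norms does not minimize any individual summand, and neither of your proposed escapes (full-rank triviality, Eckart--Young) delivers minimality of the terminal misfit in isolation --- they deliver aggregate optimality among rank-$r$ one-step propagators, which is the most that can honestly be claimed. A defensible proof would therefore either reinterpret the lemma as asserting optimality of the aggregate reconstruction error over the snapshot window (which is what the cited references actually prove, and what \cref{thm:t3-7} really needs from $\|\mathbf e^m\|_2$), or would have to track how the columnwise residual bound propagates through the $m$-step iteration to the terminal state.
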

\begin{proof}
See~\cite{schmid2010dynamic,drmac2018data}.
\end{proof}

\subsection{Main Results}
We rewrite the DMD prediction~\cref{eq:2-7} as
\begin{equation}\label{eq:3-7}
\begin{aligned}
\bold u_\text{DMD}^{n+1}& = \boldsymbol\Phi\boldsymbol\Lambda^{n+1}\bold b \\
&=\boldsymbol\Phi\boldsymbol\Lambda\boldsymbol\Phi^{-1}\boldsymbol\Phi\boldsymbol\Lambda^{n}\bold b \\
&= \boldsymbol\Phi\boldsymbol\Lambda\boldsymbol\Phi^{-1}\bold u_\text{DMD}^n\\
&=\bold u_\text{DMD}^n +(\boldsymbol\Phi\boldsymbol\Lambda\boldsymbol\Phi^{-1}-\bold I_{N\times N})\bold u_\text{DMD}^n \\
&=\bold u_\text{DMD}^n +\bold B\bold u_\text{DMD}^n.
\end{aligned}
\end{equation}
Here $\bold B =\boldsymbol\Phi\boldsymbol\Lambda\boldsymbol\Phi^{-1}-\bold I_{N\times N}$, where $\boldsymbol\Phi$ is an $N\times r$ matrix and $\boldsymbol\Phi^{-1}$ is an $r\times N$ matrix $\boldsymbol\Phi^{-1}$ defined as $\boldsymbol\Phi^{-1}\boldsymbol\Phi =\bold I_{r\times r}$.

\begin{theorem}\label{thm:t3-4}
Define the local truncation error
\begin{equation}\label{eq:3-8}
\boldsymbol \tau^n = \bold u^n-\bold u^{n-1}-\bold B\bold u^{n-1}.
\end{equation}
Then, for any $n\geq m$,
\begin{equation}\label{eq:3-9}
\|\boldsymbol \tau^n\|_2\leq \varepsilon_m,
\end{equation}
where the constant $\varepsilon_m$ depends only on the number of snapshots $m$.
\end{theorem}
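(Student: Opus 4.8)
The plan is to collapse the local truncation error into the action of a single, $n$-independent matrix on the reference state, and then bound the two factors separately: the operator factor through the DMD construction (which uses only the first $m$ snapshots) and the state factor through the stability estimate of \cref{lemma:l2}.

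First I would simplify $\boldsymbol\tau^n$ purely algebraically. Substituting $\mathbf B=\boldsymbol\Phi\boldsymbol\Lambda\boldsymbol\Phi^{-1}-\mathbf I_{N\times N}$ into \cref{eq:3-8} and cancelling the two $\mathbf u^{n-1}$ terms gives $\boldsymbol\tau^n=\mathbf u^n-\boldsymbol\Phi\boldsymbol\Lambda\boldsymbol\Phi^{-1}\mathbf u^{n-1}$. Since the reference solution obeys the exact linear recurrence $\mathbf u^n=\mathbf K\mathbf u^{n-1}$ of \cref{eq:2-5}, this becomes $\boldsymbol\tau^n=(\mathbf K-\boldsymbol\Phi\boldsymbol\Lambda\boldsymbol\Phi^{-1})\mathbf u^{n-1}=:\mathbf E_m\,\mathbf u^{n-1}$. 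The decisive observation is that $\boldsymbol\Phi\boldsymbol\Lambda\boldsymbol\Phi^{-1}$ — equivalently the exact-DMD operator $\mathbf X'\mathbf V\boldsymbol\Sigma^{-1}\mathbf U^{*}=\mathbf X'\mathbf X^{\dagger}$, with $\mathbf X^{\dagger}$ the Moore--Penrose pseudoinverse — is assembled once from the first $m$ snapshots in \cref{alg:dmd_state} and carries no dependence on $n$. Hence $\mathbf E_m$ is a fixed $N\times N$ matrix, and its spectral norm $\delta_m:=\|\mathbf E_m\|_2$ is determined entirely by $m$.

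Next I would bound the state factor uniformly in $n$. By \cref{ass:a3-1} the spectral radius satisfies $\rho(\mathbf K)\le 1$, so the discrete maximum principle \cref{eq:3-3} of \cref{lemma:l2} gives $\|\mathbf u^{n-1}\|_2\le C$, where the right-hand side of \cref{eq:3-3} is bounded by a constant $C$ independent of $n$ (for a source bounded over the prediction horizon, and simply $C=\|\mathbf u^0\|_2$ in the homogeneous case). Submultiplicativity of the spectral norm then yields $\|\boldsymbol\tau^n\|_2\le\delta_m\,\|\mathbf u^{n-1}\|_2\le\delta_m C=:\varepsilon_m$ for every $n\ge m$, with $\varepsilon_m$ depending only on $m$ (through $\delta_m$) and on the fixed problem data, which is exactly the assertion of \cref{thm:t3-4}.

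The main obstacle is making the operator bound $\delta_m=\|\mathbf K-\boldsymbol\Phi\boldsymbol\Lambda\boldsymbol\Phi^{-1}\|_2$ quantitatively meaningful rather than merely finite. Here I would invoke the least-squares optimality recorded in \cref{lemma:l3}: the DMD operator $\mathbf X'\mathbf X^{\dagger}$ is the minimizer of $\|\mathbf X'-\mathbf M\mathbf X\|_F$ over admissible $\mathbf M$, so that for the linear data $\mathbf X'=\mathbf K\mathbf X$ one obtains $\mathbf E_m=\mathbf K(\mathbf I_{N\times N}-\mathbf X\mathbf X^{\dagger})$ — that is, $\mathbf K$ composed with the orthogonal projector onto the complement of the snapshot column space. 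Estimating $\delta_m$ through this projector, and in particular arguing that enlarging $m$ enriches the captured subspace so that $\delta_m$ cannot increase, is the delicate part; one must also control the conditioning of $\boldsymbol\Phi$ (which legitimizes writing $\boldsymbol\Phi^{-1}$ as a left inverse) and the effect of the rank truncation $r$. By contrast, the algebraic reduction and the stability bound are routine once these operator-level estimates are in place.
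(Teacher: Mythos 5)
Your argument is essentially the paper's own proof: the same algebraic cancellation reduces $\boldsymbol\tau^n$ to $(\mathbf K-\boldsymbol\Phi\boldsymbol\Lambda\boldsymbol\Phi^{-1})\mathbf u^{n-1}$, the operator factor is bounded by an $m$-dependent constant, and the state factor is controlled by the stability estimate of \cref{lemma:l2}, yielding $\varepsilon_m$ exactly as in \cref{eq:3-11}. Your closing discussion of how to actually quantify $\delta_m$ via the projector $\mathbf I_{N\times N}-\mathbf X\mathbf X^{\dagger}$ goes beyond the paper, which simply asserts $\|\mathbf K-\boldsymbol\Phi\boldsymbol\Lambda\boldsymbol\Phi^{-1}\|_F\leq c_m$ without proof, but the proof of the stated theorem is the same.
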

\begin{proof}
\begin{equation}\label{eq:3-10}
\begin{aligned}
\|\boldsymbol \tau^n\|_2 =&\| \bold u^n-\bold u^{n-1}-\bold B\bold u^{n-1}\|_2\\
=&\| \bold u^n-\bold u^{n-1}-(\boldsymbol\Phi\boldsymbol\Lambda\boldsymbol\Phi^{-1}-\bold I_{N\times N})\bold u^{n-1}\|_2\\
=&\| \bold u^n-\boldsymbol\Phi\boldsymbol\Lambda\boldsymbol\Phi^{-1}\bold u^{n-1}\|_2\\
=&\|(\bold K-\boldsymbol\Phi\boldsymbol\Lambda\boldsymbol\Phi^{-1})\bold u^{n-1}\|_2\\
\leq&\|\bold K-\boldsymbol\Phi\boldsymbol\Lambda\boldsymbol\Phi^{-1}\|_F\|\bold u^{n-1}\|_2.
\end{aligned}
\end{equation}
Since $\|\bold K-\boldsymbol\Phi\boldsymbol\Lambda\boldsymbol\Phi^{-1}\|_F\leq c_m$ where $c_m$ is a constant depending on the number of snapshots $m$,~\cref{thm:t3-4} holds with
\begin{equation}\label{eq:3-11}
\varepsilon_m = c_m(\|\bold u^0\|_2+\Delta t\max\{\sum_{k=0}^{n-1}\|\bold f^k\|_2,\sum_{k=1}^{n}\|\bold f^k\|_2\}).
\end{equation}
\end{proof}

\begin{remark}
The value of $c_m$ decreases to $0$ as $m$ increases and so does $\varepsilon_m$. In the limit of large number of snapshots, $\boldsymbol\Lambda$ and $\boldsymbol\Phi$ become the exact eigenvalues and eigenvectors of $\bold K$. Then 
\begin{equation}\label{eq:3-12}
\begin{aligned}
\|\bold K-\boldsymbol\Phi\boldsymbol\Lambda\boldsymbol\Phi^{-1}\|_F &= \sup_{z\in \mathbb R^{N}\setminus\{ 0\}}\frac{\|\bold Kz-\boldsymbol\Phi\boldsymbol\Lambda \boldsymbol\Phi^{-1}z\|_2}{\|z\|_2}\\
&= \sup_{w\in \mathbb R^{N}\setminus\{ 0\}}\frac{\|\bold K \boldsymbol\Phi w-\boldsymbol\Phi\boldsymbol\Lambda w\|_2}{\|\boldsymbol\Phi w\|_2}\\
&= \sup_{w\in \mathbb R^{N}\setminus\{ 0\}}\frac{\|\boldsymbol\Lambda \boldsymbol\Phi w-\boldsymbol\Phi\boldsymbol\Lambda w\|_2}{\|\boldsymbol\Phi w\|_2}\\
&=0.
\end{aligned}
\end{equation}
In other words, the more snapshots are obtained, the more accurate the approximation of $\bold K$ becomes. Thus, the local truncation error caused by replacing $\bold K$ with $\bold B$ can be minimized. A convergence proof of the eigenvalue and eigenfunction approximation of $\bold K$ by DMD and convergence from $\bold K \to \mathcal K_t$ can be found in \cite{korda2018convergence}.
\end{remark}

\begin{remark}
For fixed $m$,  the local trucncation error can be improved by refining the Ritz pairs in the DMD algorithm [DDMD-RRR]. See \cite{drmac2018data}.
\end{remark}

\begin{theorem}\label{thm:t3-7}
Define the global truncation error
\begin{equation}\label{eq:3-13}
\bold e^n = \bold u^n-\bold u_\mathrm{DMD}^n.
\end{equation}
Then, for $n\geq m$,
\begin{equation}\label{eq:3-14}
\|\bold e^n\|_2 \leq \|\boldsymbol\Phi^{-1}\|_F[\|\bold e^m\|_2+(n-m)\varepsilon_m].
\end{equation}
\end{theorem}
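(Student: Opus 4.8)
The plan is to turn the one-step relations for the exact and the DMD-predicted states into a recursion for the global error $\bold e^n$, unroll that recursion from the fitting horizon $n=m$ out to the prediction time $n$, and then control the resulting amplification factors using \cref{ass:a3-1} together with the local bound of \cref{thm:t3-4}. First I would subtract the DMD update from the exact update. Shifting the index in \cref{eq:3-7} gives $\bold u_\text{DMD}^n=(\bold I_{N\times N}+\bold B)\bold u_\text{DMD}^{n-1}$, while the definition \cref{eq:3-8} of the local truncation error rearranges to $\bold u^n=(\bold I_{N\times N}+\bold B)\bold u^{n-1}+\boldsymbol\tau^n$. Subtracting these and recalling $\bold I_{N\times N}+\bold B=\boldsymbol\Phi\boldsymbol\Lambda\boldsymbol\Phi^{-1}$ produces the clean error recursion
\begin{equation*}
\bold e^n=\boldsymbol\Phi\boldsymbol\Lambda\boldsymbol\Phi^{-1}\bold e^{n-1}+\boldsymbol\tau^n.
\end{equation*}

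Next I would iterate this recursion down to the level $n=m$. Because $\boldsymbol\Phi^{-1}\boldsymbol\Phi=\bold I_{r\times r}$, consecutive factors telescope, so $(\boldsymbol\Phi\boldsymbol\Lambda\boldsymbol\Phi^{-1})^k=\boldsymbol\Phi\boldsymbol\Lambda^k\boldsymbol\Phi^{-1}$, and unrolling yields
\begin{equation*}
\bold e^n=\boldsymbol\Phi\boldsymbol\Lambda^{n-m}\boldsymbol\Phi^{-1}\bold e^m+\sum_{j=m+1}^{n}\boldsymbol\Phi\boldsymbol\Lambda^{n-j}\boldsymbol\Phi^{-1}\boldsymbol\tau^j.
\end{equation*}
Taking the $\ell_2$ norm and applying the triangle inequality and submultiplicativity reduces the whole problem to bounding a single amplification constant, the operator norm $\|\boldsymbol\Phi\boldsymbol\Lambda^k\boldsymbol\Phi^{-1}\|_2\le\|\boldsymbol\Phi\|_2\,\|\boldsymbol\Lambda^k\|_2\,\|\boldsymbol\Phi^{-1}\|_2$, uniformly over the exponents $k=n-j\ge 0$, weighted against $\|\bold e^m\|_2$ and each $\|\boldsymbol\tau^j\|_2$.

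The main obstacle is precisely this uniform control of the amplification constant, and it is where \cref{ass:a3-1} does the work. Since the DMD eigenvalues collected in $\boldsymbol\Lambda$ approximate those of $\boldsymbol{\mathcal A}$, which obey $\max_i|\lambda_i|\le 1$, the diagonal matrix satisfies $\|\boldsymbol\Lambda\|_2=\max_i|\lambda_i|\le 1$ and hence $\|\boldsymbol\Lambda^k\|_2\le\|\boldsymbol\Lambda\|_2^k\le 1$ for every $k\ge 0$; no power of $\boldsymbol\Lambda$ amplifies the error, which is the spectral counterpart of the discrete maximum principle established in \cref{lemma:l2}. Normalizing the DMD modes so that $\|\boldsymbol\Phi\|_2\le 1$ and passing to the Frobenius norm via $\|\boldsymbol\Phi^{-1}\|_2\le\|\boldsymbol\Phi^{-1}\|_F$ then caps the amplification constant at $\|\boldsymbol\Phi^{-1}\|_F$, independently of $k$.

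Finally I would assemble the estimate: inserting $\|\boldsymbol\tau^j\|_2\le\varepsilon_m$ from \cref{thm:t3-4} and noting that the sum has exactly $n-m$ terms gives
\begin{equation*}
\|\bold e^n\|_2\le\|\boldsymbol\Phi^{-1}\|_F\,\|\bold e^m\|_2+\|\boldsymbol\Phi^{-1}\|_F\,(n-m)\,\varepsilon_m=\|\boldsymbol\Phi^{-1}\|_F\big[\|\bold e^m\|_2+(n-m)\varepsilon_m\big],
\end{equation*}
which is exactly \cref{eq:3-14}. The one point that must be stated carefully is the normalization and spectral-radius hypothesis that collapses the per-step amplification to at most $\|\boldsymbol\Phi^{-1}\|_F$: the bound $\|\boldsymbol\Lambda^k\|_2\le 1$ presumes the DMD eigenvalues inherit the contraction $\max_i|\lambda_i|\le 1$ of \cref{ass:a3-1}, and the factor $\|\boldsymbol\Phi\|_2\le 1$ presumes a suitable normalization of the mode matrix $\boldsymbol\Phi=\bold X'\bold V\boldsymbol\Sigma^{-1}\bold W$. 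Without $\max_i|\lambda_i|\le 1$ the powers $\boldsymbol\Lambda^{n-j}$ would grow geometrically and the error bound could no longer stay linear in the extrapolation length $n-m$.
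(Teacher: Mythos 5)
Your proposal is correct and follows essentially the same route as the paper: the same error recursion $\bold e^n=\boldsymbol\Phi\boldsymbol\Lambda\boldsymbol\Phi^{-1}\bold e^{n-1}+\boldsymbol\tau^n$, the same unrolling back to $n=m$ giving $\bold e^n=\boldsymbol\Phi\boldsymbol\Lambda^{n-m}\boldsymbol\Phi^{-1}\bold e^m+\sum\boldsymbol\Phi\boldsymbol\Lambda^{k}\boldsymbol\Phi^{-1}\boldsymbol\tau^{n-k}$, and the same appeal to \cref{thm:t3-4} for each $\|\boldsymbol\tau^j\|_2$. You are in fact more explicit than the paper about the one delicate step: the paper's final inequality still carries the factors $\|\boldsymbol\Phi\boldsymbol\Lambda^{n-m}\|_F$ and $\max_k\|\boldsymbol\Phi\boldsymbol\Lambda^k\|_F$ and silently drops them, whereas you correctly identify that collapsing the per-step amplification to $\|\boldsymbol\Phi^{-1}\|_F$ requires both $\|\boldsymbol\Lambda\|_2\le 1$ (inherited from \cref{ass:a3-1}) and a normalization $\|\boldsymbol\Phi\|_2\le 1$ of the mode matrix.
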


\begin{proof}
\begin{equation}\label{eq:3-15}
\begin{aligned}
\bold e^n =& \bold u^n-\bold u_\text{DMD}^n\\
=&\bold u^n-(\bold u_\text{DMD}^m+\bold B\bold u_\text{DMD}^m+\bold B\bold u_\text{DMD}^{m+1}+\cdots+\bold B\bold u_\text{DMD}^{n-1})\\
=&\bold u^n-\bold u^{n-1}+\bold u^{n-1}-(\bold u_\text{DMD}^m+\bold B\bold u_\text{DMD}^m+\bold B\bold u_\text{DMD}^{m+1}+\cdots+\bold B\bold u_\text{DMD}^{n-2}) 
-\bold B\bold u_\text{DMD}^{n-1}\\
=&\bold u^n-\bold u^{n-1}+\bold e^{n-1}-\bold B\bold u_\text{DMD}^{n-1}\\
=&\bold e^{n-1}+\bold u^n-\bold u^{n-1}-\bold B\bold u^{n-1}+\bold B\bold u^{n-1}-\bold B\bold u_\text{DMD}^{n-1}\\
=&\bold e^{n-1}+\boldsymbol \tau^{n}+\bold B\bold e^{n-1}\\
=&\boldsymbol \tau^n+\boldsymbol\Phi\boldsymbol\Lambda\boldsymbol\Phi^{-1}\bold e^{n-1}\\
=&\boldsymbol \tau^n+\boldsymbol\Phi\boldsymbol\Lambda\boldsymbol\Phi^{-1} (\boldsymbol \tau^{n-1}+\boldsymbol\Phi\boldsymbol\Lambda\boldsymbol\Phi^{-1}\bold e^{n-2})\\
=&\boldsymbol \tau^n+\boldsymbol\Phi\boldsymbol\Lambda\boldsymbol\Phi^{-1}\boldsymbol \tau^{n-1}+\boldsymbol\Phi\boldsymbol\Lambda^2\boldsymbol\Phi^{-1}\bold e^{n-2}\\
=&\cdots\\
=&\boldsymbol\Phi\boldsymbol\Lambda^{n-m}\boldsymbol\Phi^{-1}\bold e^m+\sum_{k=0}^{n-m-1}\boldsymbol\Phi\boldsymbol\Lambda^k\boldsymbol\Phi^{-1}\boldsymbol \tau^{n-k}.
\end{aligned}
\end{equation}
Then 
\begin{equation}\label{eq:3-16}
\begin{aligned}
\|\bold e^n\|_2&\leq \|\boldsymbol\Phi\boldsymbol\Lambda^{n-m}\boldsymbol\Phi^{-1}\|_F\|\bold e^m\|_2+(n-m)\varepsilon_m\max_{0\leq k\leq n-m-1}\|\boldsymbol\Phi\boldsymbol\Lambda^k\boldsymbol\Phi^{-1}\|_F\\
&\leq  \|\boldsymbol\Phi\boldsymbol\Lambda^{n-m}\|_F\|\boldsymbol\Phi^{-1}\|_F\|\bold e^m\|_2+(n-m)\varepsilon_m\max_{0\leq k\leq n-m-1}\|\boldsymbol\Phi\boldsymbol\Lambda^k\|_F\|\boldsymbol\Phi^{-1}\|_F.
\end{aligned}
\end{equation}
According to~\cref{lemma:l3}, $\|\bold e^m\|$ is fixed and minimal. Hence, if accuracy of the local truncation error is of $\mathcal O((\Delta t)^q)$, then the global truncation error is of $\mathcal O((\Delta t)^{q-1})$.
\end{proof}

\cref{thm:t3-7} provides quantitative error bounds of the DMD method with explicit error dependence. In complex simulations, one would not expect the DMD prediction from a local data set to capture the global dynamics accurately. Instead, one can  use the error bounds to set up a threshold for DMD prediction limits and combine a resolved algorithm with fast DMD prediction. This would considerably speed up the simulations.

\subsection{Application to nonlinear parabolic problems}

Consider a general nonlinear reaction-diffusion equation in $d$ spatial dimensions
\begin{equation}\label{eq:4-2}
\left\{
\begin{aligned}
&\partial_t u = \nabla \cdot [k\psi(u)\nabla u] + f(u), \qquad \mathbf x \in \mathcal D \subset \mathbb R^d, \quad t > 0 \\
&u(\mathbf x,0) = u_0(\mathbf x), \qquad \mathbf x \in \mathcal D,
\end{aligned}
\right.
\end{equation}
with non-negative functions $k = k(\mathbf x)$ and $\psi = \psi(u)$ whose product is diffusion coefficient $D(\mathbf x, u) = k(\mathbf x) \psi(u)$. Spatial discretization of~\cref{eq:4-2} leads to the corresponding high-dimensional nonlinear ODE~\cref{eq:2-3}. Its DMD treatment relies on the one's ability to identify informative observables and requires the prior knowledge of the structure of governing equations such as~\cref{eq:4-2}.  Examples in~\cref{sec:t2,sec:t3,sec:t4} illustrate the critical role of observable selection in the DMD method.

For~\cref{eq:4-2}, expert knowledge suggests the existence of a function $\eta(u)$ such that $\eta'(u) = \psi(u)$, which can be constructed via the Kirchhoff transform (e.g.,~\cite{tartakovsky-2003-stochastic, tartakovsky1999conditional}). Then, by chain rule, \cref{eq:4-2} is rewritten as
\begin{equation}\label{eq:4-3}
\left\{
\begin{aligned}
&\partial_t u -\nabla \cdot [k\nabla \eta(u) ] = f(u)\\
&u(x,0) = u_0(x),
\end{aligned}
\right.
\end{equation}
so that the nonlinear diffusion in $u$ becomes linear in $\eta$. Spatial discretization of~\cref{eq:4-3} leads to
\begin{equation}\label{eq:4-4}
\frac{\text d\bold u}{\text dt} = \boldsymbol{\mathcal A} \boldsymbol \eta(\bold u) +\bold F(\bold u)
\end{equation}
where $\boldsymbol{\mathcal A}$ is the same linear operator in~\cref{eq:2-2}. Motivated by the nonlinear observable choice for the nonlinear Schr\"odinger equation in~\cite{kutzbook}, and by the accurate and robust performance of DMD on linear diffusion reported below, we choose the observable 
\begin{equation}\label{eq:4-5}
\begin{aligned}
\bold g = [g_1(\bold u), \cdots, g_p(\bold u)],  \qquad \mbox{s.t.}  \ \ \bold u, \boldsymbol \eta(\bold u), \bold F(\bold u) \in \text{span}\{g_1(\bold u), \cdots, g_p(\bold u)\}.
\end{aligned}
\end{equation}

The reference solution of~\cref{eq:4-2} is obtained by discretizing~\cref{eq:4-4} in time,
\begin{equation}\label{eq:4-6}
\bold u^{n+1} =\bold u^n+\Delta t\boldsymbol \eta^*+\Delta t\bold F^*,
\end{equation}
where the superscript $*$ denotes linear interpolation between time $t^{n+1}$ and $t^n$. For the observables in~\cref{eq:4-5}, we have
\begin{equation}\label{eq:4-7}
\begin{aligned}
&\bold u^{n+1}, \boldsymbol \eta^{n+1}, \bold F^{n+1} \in \text{span} \{g_1(\bold u^{n+1}), \cdots, g_p(\bold u^{n+1})\},\\
&\bold u^{n}, \boldsymbol \eta^{n}, \bold F^{n} \in \text{span} \{g_1(\bold u^{n}), \cdots, g_p(\bold u^{n})\}.
\end{aligned}
\end{equation}
Thus, \cref{alg:dmd_state} induces a linear operator denoted by $\bold K$ such that
\begin{equation}\label{eq:4-8}
\bold y^{n+1} =\bold K \bold y^n,
\end{equation}
where $\bold y^n =\bold g(\bold u^n)$ defined in~\cref{eq:2-13}. Treating~\cref{eq:4-8} as reference solution, against which we compare the DMD prediction~\cref{eq:2-16}, one gets exactly the same formulae as~\cref{eq:2-5} and~\cref{eq:2-7} but in observable space:
\begin{equation}\label{eq:4-9}
\begin{aligned}
\bold y^{n+1} = & \; \bold K\bold y^n,\\
\bold y_\text{DMD}^{n+1} = & \; \Phi \Lambda^{n+1}\bold b.
\end{aligned}
\end{equation}
So the error analysis in~\cref{sec:acc} carries on in terms of $\bold y$.

\section{Numerical Tests}
\label{sec:tests}

We test the robustness of our error estimates and the DMD performance in the extrapolation regime on several test problems arranged in order of difficulty.

In our resolved simulations, we use finite difference in space and forward Euler in time with CFL condition $\Delta t\sim \mathcal O((\Delta x)^2)$. Although there are many relatively efficient implicit/semi-implicit solvers, the computational difficulty of solving high-dimensional systems iteratively remains essentially the same. We would regard them the same order of computational time and simply take the fully explicit discretization as the resolved solutions. In the following tests, $N=500$ spatial mesh is created in $x$ and $n=500$ solutions are uniformly selected from a specified time interval. Thus, the reference solution is built on this $500\times 500$ mesh. We also compare the relative performance of DMD and POD(-DEIM) in terms of both their computational time and error with respect to the reference solution.

\subsection{Linear diffusion}

We start with a linear diffusion equation,
\begin{subequations}\label{4-10}
\begin{equation}
\frac{\partial u}{\partial t} = \frac{\partial^2 u}{\partial x^2},  \qquad x \in[0,1], \qquad t\in [0,T]\\
\end{equation}
%%
%\begin{equation}\label{eq:4-1}
%\left\{
%\begin{aligned}
%&\partial_t u -\nabla(k\nabla u) = f,\\
%&u(x,0) = u_0(x),
%\end{aligned}
%\right.
%\end{equation}
subject to several sets of initial and boundary conditions
\begin{align}
u(x,0) = u_0, \qquad u(0,t) = u_\text{L}, \qquad u(1,t) = 1.
\end{align}
\end{subequations}
Discretization of the spatial domain $[0,1]$ with a fine mesh of size $\Delta x \ll1$ gives rise to the equivalent high-dimensional ODE~\cref{eq:2-2}, where $\bold u = [u(x_1,t),\cdots,u(x_N,t)]^\top$ is the spatial discretization of $u(x,t)$ with $N\gg1$ and $\boldsymbol{\mathcal A}$ is a linear operator representing the diffusion. In this setting, \cref{ass:a3-1} certainly holds by the proof in Chapter~7 of~\cite{thomee1984galerkin}.

\subsubsection{Relaxation to equilibrium (Test~1a)}
Consider~\eqref{4-10} with $T = 0.2$, $u_0 = 0$, and $u_\text{L} = 0$.
%
%\begin{equation}%\label{4-10}
%\left\{
%\begin{aligned}
%&u_t=\partial_{xx} u, x\in[0,1], t\in [0,0.2]\\
%&u(x,0)=0,\\
%&u(0,t)=0, u(1,t) = 1.
%\end{aligned}
%\right.
%\end{equation}
\cref{fig:f1} demonstrates visual agreement between the true solution $u(x,t)$ and its counterpart predicted by DMD with $m=200$ temporal snapshots, the two solutions converge to the same stationary state.

\begin{figure}[tbhp]
 \centering
 \includegraphics{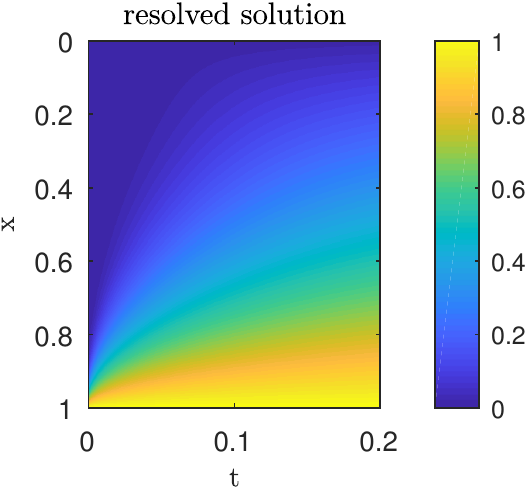}
\includegraphics{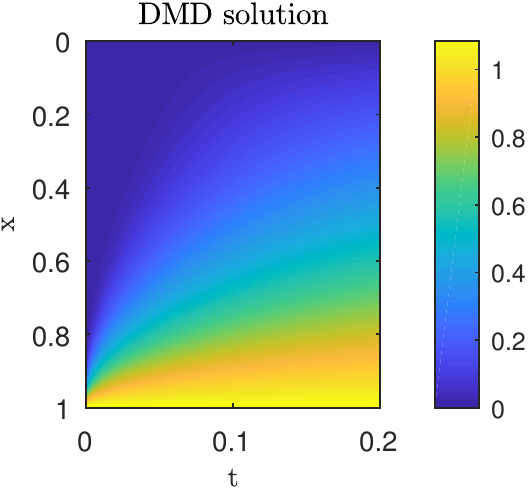}
\caption{Test~1a. Reference solution (left) and its DMD approximation with $m=200$ snapshots (right).}
\label{fig:f1}
\end{figure}

\Cref{fig:f2} exhibits the local truncation error $\tau$~\cref{eq:3-8} and the global truncation error $\bold e$~\cref{eq:3-13} of the DMD with $m=100$, $200$ and $300$ snapshots of the reference solution. The rank in step 1 of~\cref{alg:dmd_state}. is truncated by the criteria of 
\begin{equation}\label{eq:4-11}
r=\max\{i: \sigma_i> \epsilon \sigma_1\},
\end{equation}
where $\sigma_i$ are the diagonal elements of $\boldsymbol\Sigma$ in SVD. The figure shows that the local truncation errors decreases with the number of snapshots, resulting in a more accurate prediction. This is consistent with the intuition that DMD can better capture the dynamics by learning from richer/larger data sets.

\begin{figure}[tbhp]
\centering
\includegraphics{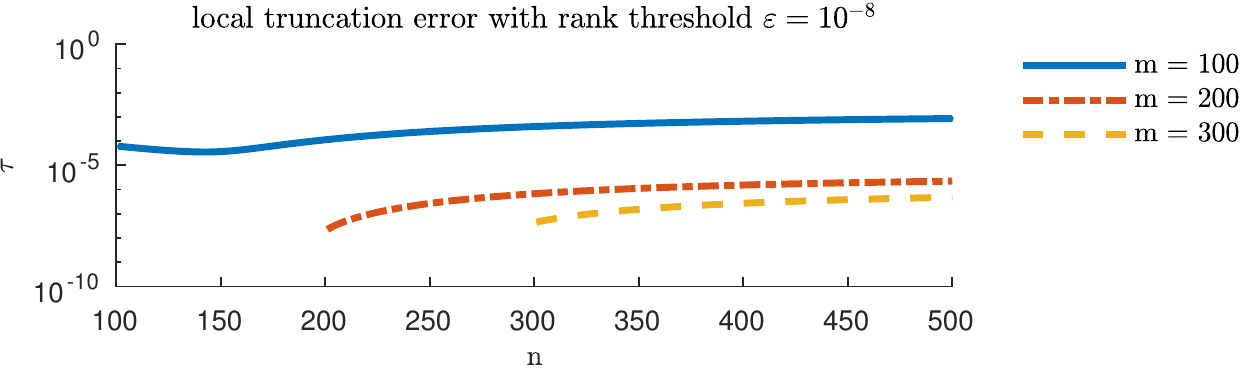}
\includegraphics{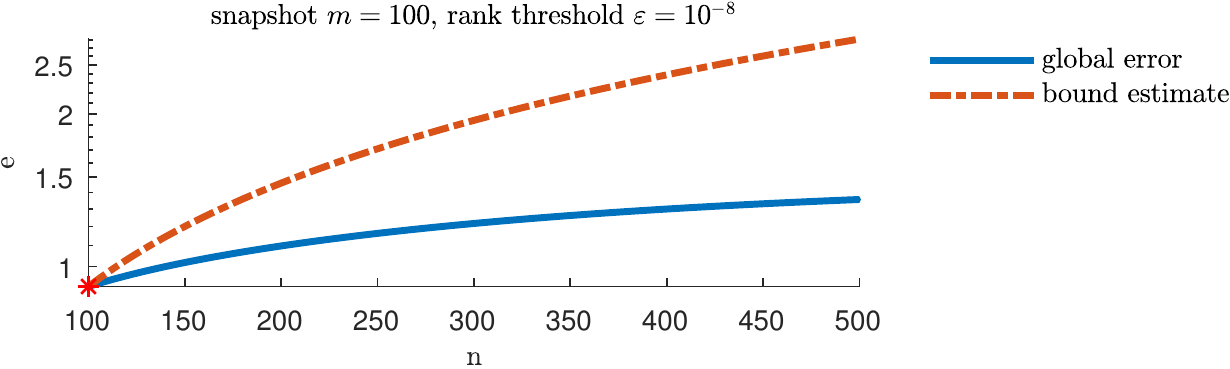}
\includegraphics{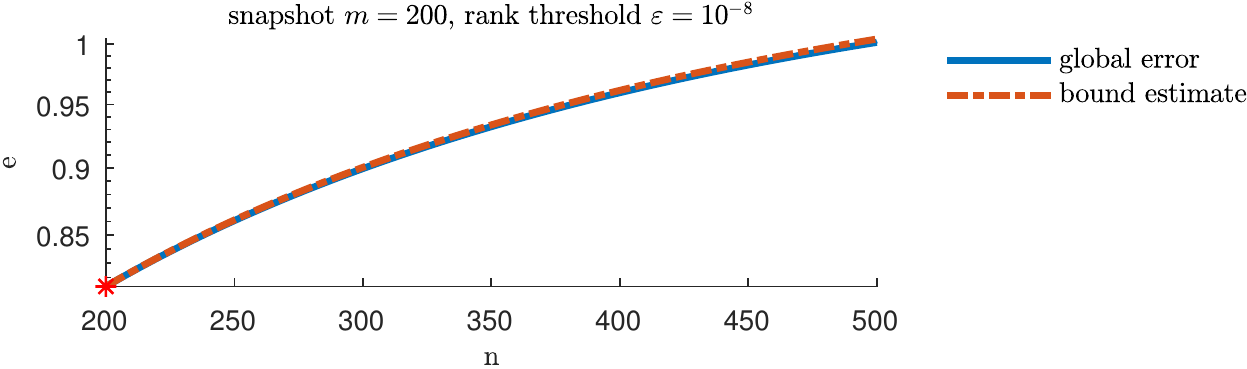}
\caption{Test~1a. Local truncation error $\tau$ for DMD with $m=100$, $200$ and $300$ snapshots (top); and global error $\bold e$ (error of the solution $u$) for DMD with $m=100$ (middle) and $m =200$ snapshots (bottom). The global error is negligible for $m = 300$ (not shown). The rank threshold is set to $\varepsilon = 10^{-8}$.}
\label{fig:f2}
\end{figure}

If a more stringent condition on the rank truncation is imposed, i.e., a relatively higher-order surrogate model is established, further reduction in both local and global errors is observed (\cref{fig:f4}). The good performance of DMD in Test 1a is not surprising: the monotonic (exponential) decay of the solution to the linear diffusion equation is captured by a relatively few temporal snapshots. The next example provides a more challenging test by introducing temporal fluctuations at the boundary $x=0$.

\begin{figure}[tbhp]
\centering
\includegraphics{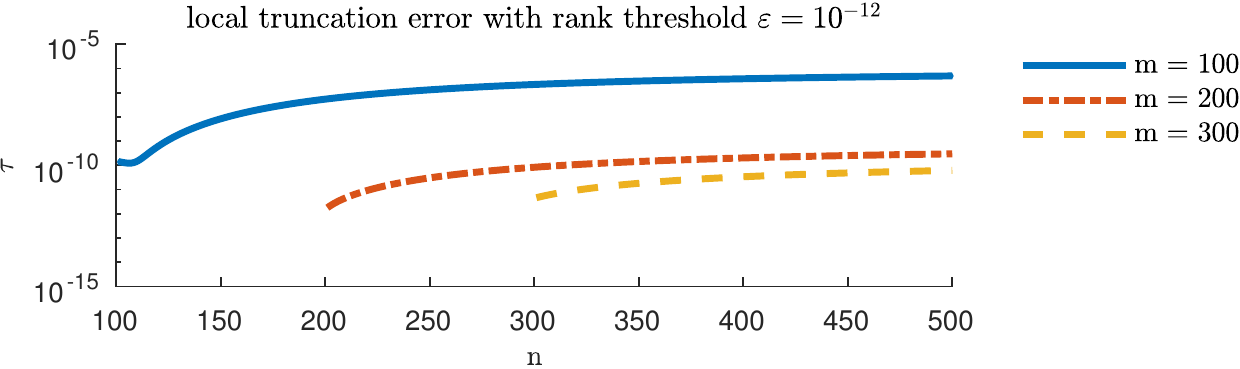}
\includegraphics{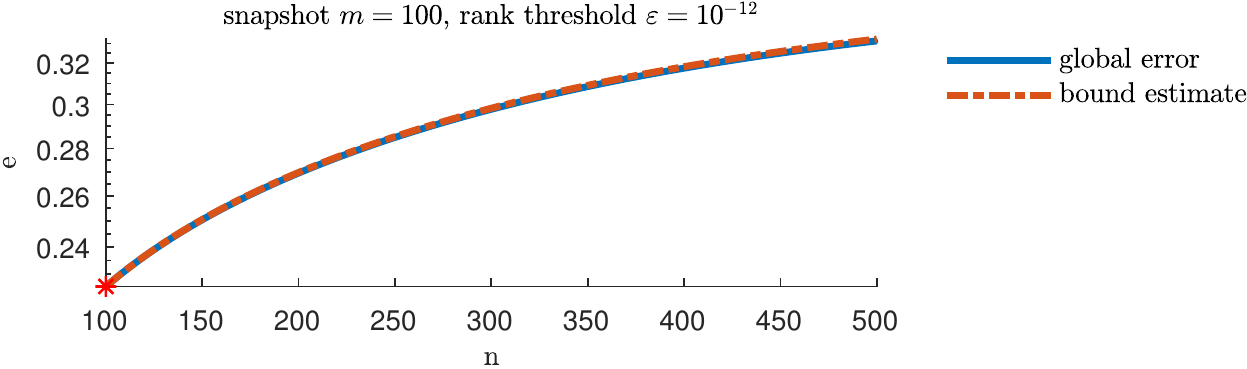}
\caption{Test~1a. Local truncation error for DMD with $m=100$, $200$ and $300$ snapshots (top); and global error $\bold e$ (error of the solution $u$)  for DMD with $m=100$ snapshots (bottom). The global error is negligible for $m = 200$ and $300$ (not shown). The rank threshold is set to $\varepsilon = 10^{-12}$.}
\label{fig:f4}
\end{figure}

\subsubsection{Periodic boundary fluctuations (Test 1b)}
Consider~\eqref{4-10} with $T = \pi/2$, $u_0 = 1$, and $u_\text{L} = 1.01+0.01\sin(-\pi/2+10t)$.
%\begin{equation}
%\left\{
%\begin{aligned}
%&u_t=\partial_{xx} u, \ x\in[0,1], \ t\in[0,\pi/2],\\
%&u(x,0)=1,\\
%&u(0,t)=1.01+0.01\sin(-\pi/2+10t), u(1,t) = 1.
%\end{aligned}
%\right.
%\end{equation}
\cref{fig:f6} demonstrates that $m=200$ snapshots is sufficient for DMD to match the reference solution. The corresponding local and global truncation errors are plotted in \cref{fig:f7}. Since the solution $u(x,t)$ to~\eqref{4-10} with the parameter values used in Test 2 has a period of $\pi/5$, $m=100$ snapshots are not enough to cover the whole period. Consequently, DMD fails to capture the system dynamic and to predict the future states accurately. However, once the full period of the solution is covered by snapshots data, i.e., when $m=200$ or $300$ snapshots are used, DMD is accurate even for long-time prediction.  The error bound in~\cref{thm:t3-7} does a good job bounding the computed error. 

\begin{figure}[tbhp]
 \centering
\includegraphics{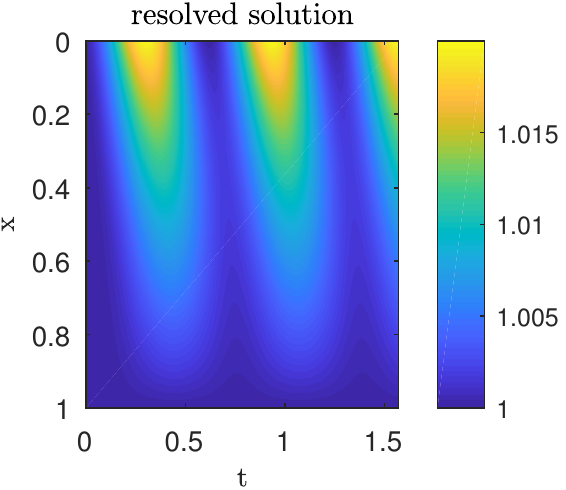}
\includegraphics{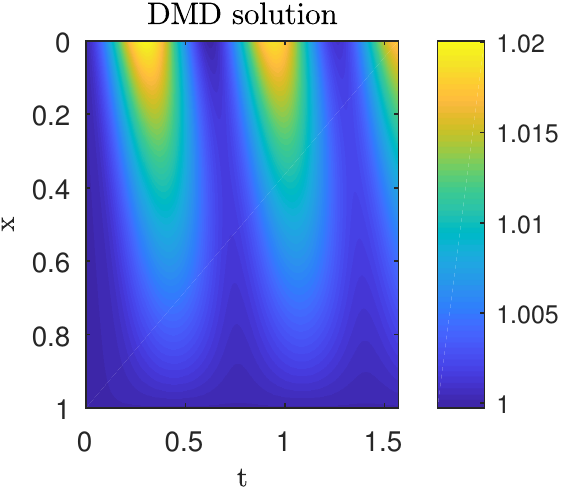}
\caption{Test 1b. Reference solution (left) and its DMD approximation with $m=200$ snapshots (right).}
\label{fig:f6}
\end{figure}

Although not shown here, the reliance on a more restricted rank truncation, i.e., setting the rank threshold to $\varepsilon = 10^{-12}$, improves DMD's accuracy by at least an order of magnitude for the parameter values considered.

\begin{figure}[tbhp]
 \centering
\includegraphics{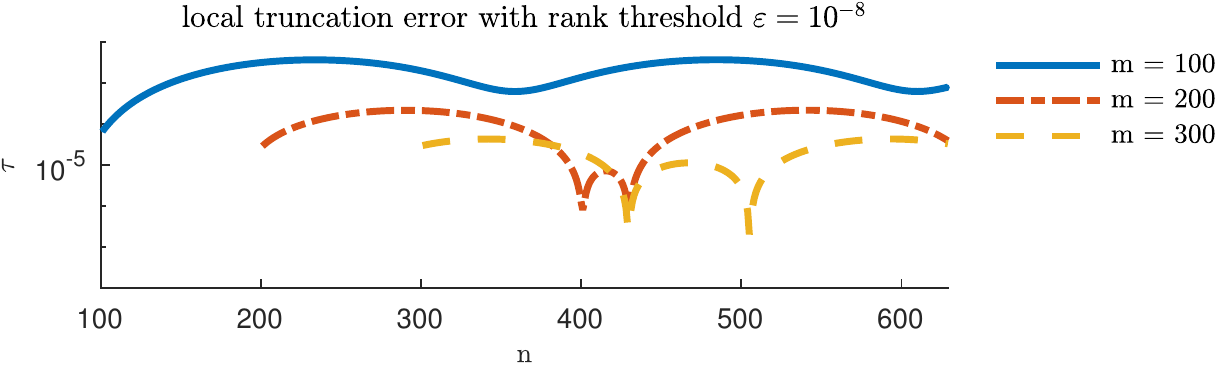}
\includegraphics{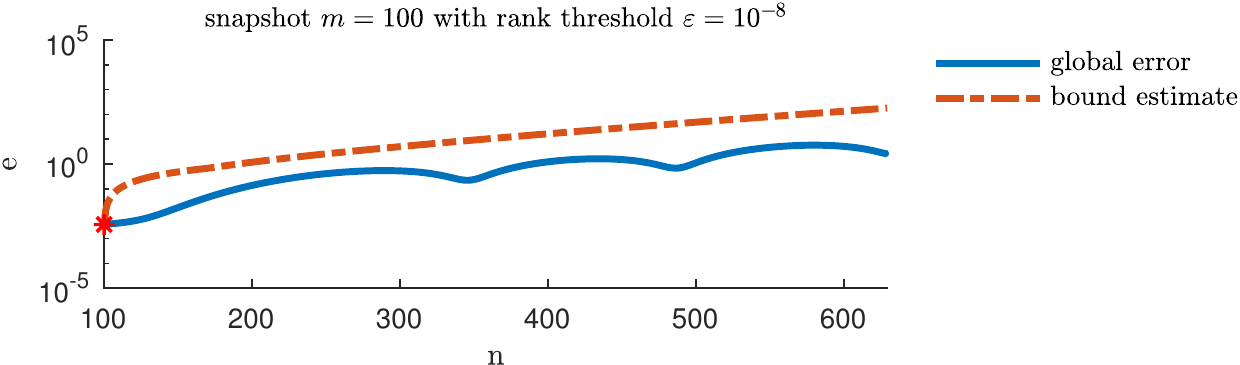}
\includegraphics{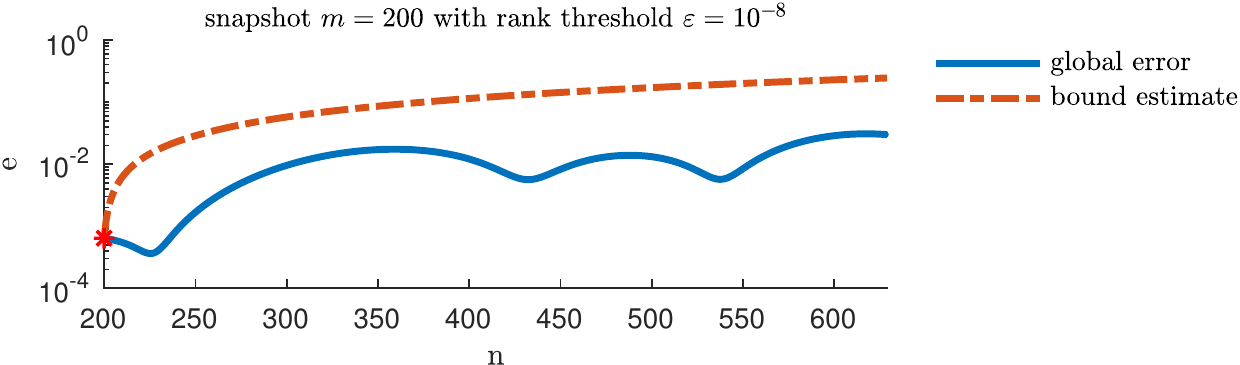}
\caption{Test 1b. Local truncation error $\tau$ for DMD with $m=100$, $200$ and $300$ snapshots (top); and global error $\bold e$ (error of the solution $u$) for $m=100$ (middle) and $m =200$ snapshots (bottom). The global error is negligible for $m = 300$ (not shown). The rank threshold is set to $\varepsilon = 10^{-8}$.}
\label{fig:f7}
\end{figure}

%\begin{figure}[tbhp]
% \centering
%\includegraphics{Fig/Test1_2/m100_1}
%
%%\includegraphics{Fig/Test1_2/m100_1long}
%
%\includegraphics{Fig/Test1_2/m200_1}
%
%%\includegraphics{Fig/Test1_2/m200_1long}
%\caption{Test 4.1.2: Global error using $m=100,200,300$ snapshots with rank threshold $\varepsilon = 10^{-8}$.}
%\label{fig:f8}
%\end{figure}

%\begin{figure}[tbhp]
% \centering
%\includegraphics{Fig/Test1_2/local_2}
%\caption{Test 2: Local truncation error using $m=100,200,300$ snapshots with rank threshold $\varepsilon = 10^{-12}$.}
%\label{fig:f9}
%\end{figure}
%
%\begin{figure}[tbhp]
% \centering
%\includegraphics{Fig/Test1_2/m100_2}
%
%\includegraphics{Fig/Test1_2/m100_2long}
%
%\includegraphics{Fig/Test1_2/m200_2}
%
%\includegraphics{Fig/Test1_2/m200_2long}
%\caption{Test 2: Global error using $m=100,200,300$ snapshots with rank threshold $\varepsilon = 10^{-12}$.}
%\label{fig:f10}
%\end{figure}

\subsection{Reaction-diffusion equation}
\label{sec:t2}
Consider a reaction-diffusion equation
\begin{subequations}\label{eq:rde}
\begin{equation}
\frac{\partial u}{\partial t} = \theta \frac{\partial^2 u}{\partial x^2} - \mu (u-u^3),  \qquad x\in[0,1], \qquad t\in [0,2]
\end{equation}
with constant coefficients $\theta, \mu \in \mathbb R^+$. It is subject to initial and boundary conditions
\begin{equation}
u(x,0)=0.5+0.5\sin(\pi x), \qquad u(0,t) = 0, \qquad u(1,t) = 0.
\end{equation}
\end{subequations}

\subsubsection{Diffusion-dominated regime (Test 2a)}
To achieve this regime ($\theta \gg \mu$), we set $\theta =0.1$ and $\mu = 0.01$. \cref{fig:f11} exhibits the fully resolved solution with its approximations provided by DMD with different observables, $g_1(u)= u$ and $\mathbf g_2(u) = (u, u^3)$, and by POD-DEIM. In~\cref{fig:f11}, the choice of observables does not appreciably affect DMD's performance due to the dominating linear diffusion, though one can still observe higher order accuracy of $\bold g_2$ than $g_1$ in the logarithm solution error plot~\cref{fig:f12}.

\begin{figure}[tbhp]
 \centering
\includegraphics{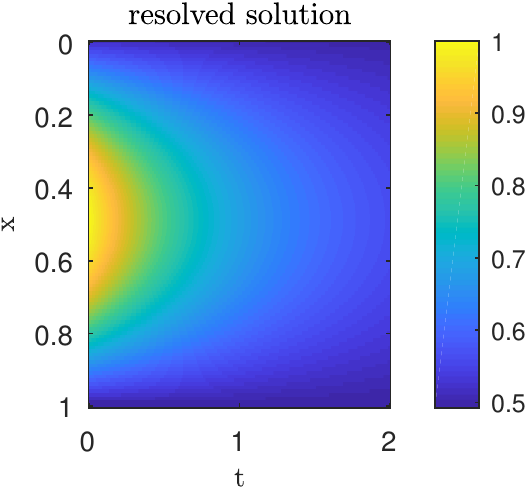}
\includegraphics{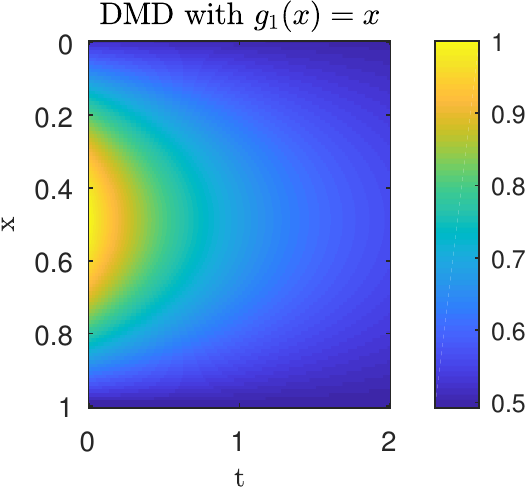}
\includegraphics{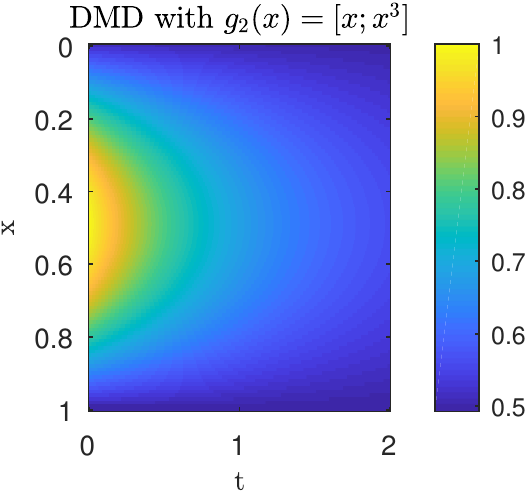}
\includegraphics{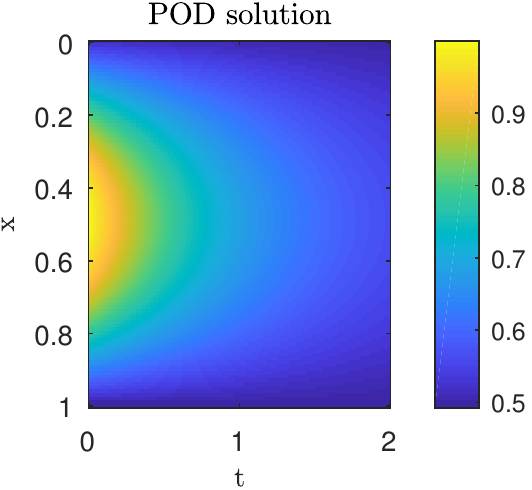}
\caption{Test 2a. Fully resolved solution $u(x,t)$ of the reaction-diffusion problem~\eqref{eq:rde} in the diffusion-dominated regime, and its approximations obtained from $m=200$ snapshots with DMD (with two sets of observables $\mathbf g$) and POD-DEIM.}
\label{fig:f11}
\end{figure}

The corresponding prediction errors are also reported in~\cref{fig:f12}. With the same rank truncation criteria, POD is more accurate than DMD, especially in the absence of ``right'' observables. However, DMD is much faster than POD. We report the computational costs comparison in~\cref{sec:compare}.

\begin{figure}[!htp]
\centering
\includegraphics{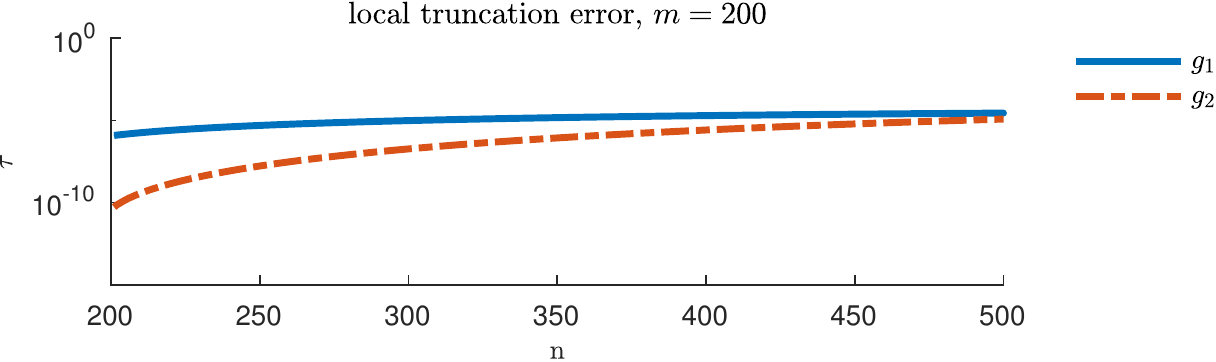}
\includegraphics{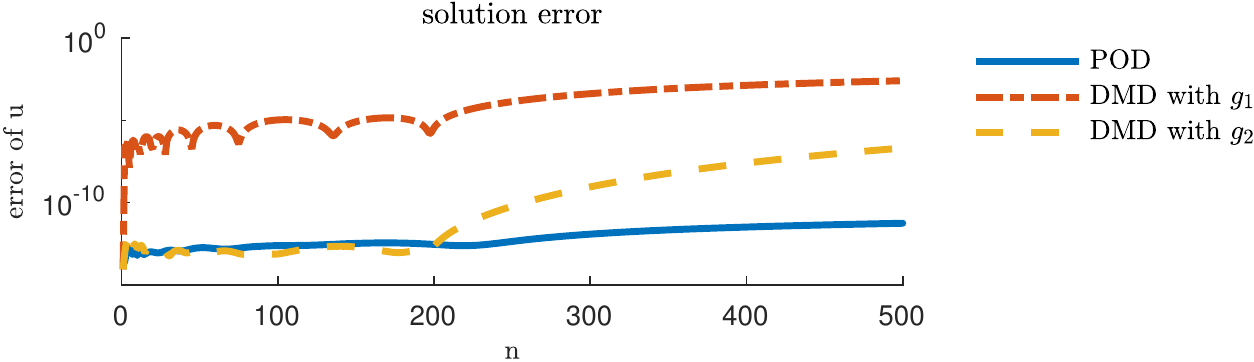}
\includegraphics{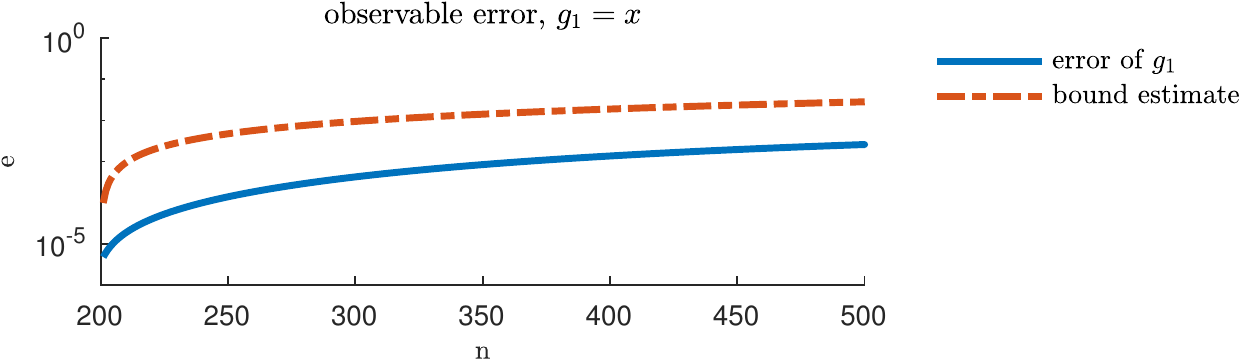}
\includegraphics{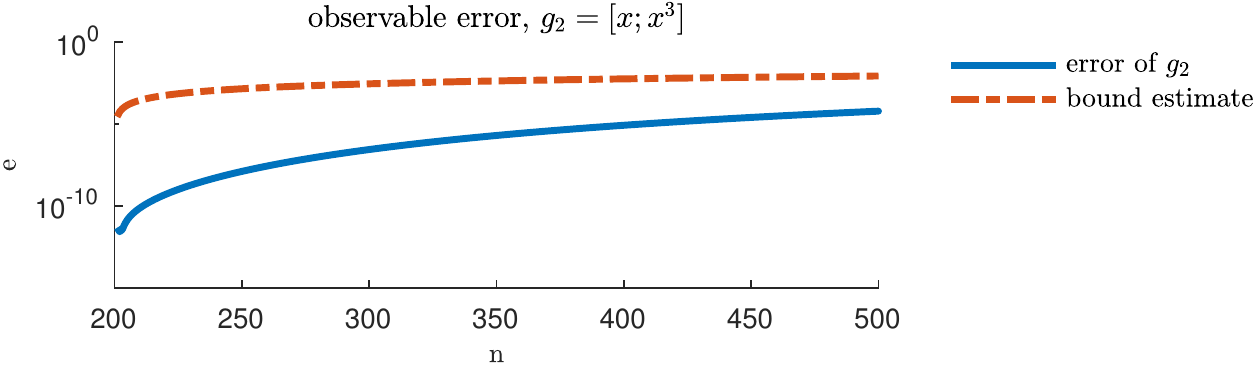}
\caption{Test 2a. Local truncation error; Comparison of POD and DMD errors of the solution; Global error (errors of the observables) for DMD prediction with observables $g_1$ and $\bold g_2$ using $m=200$ snapshots.}
\label{fig:f12}
\end{figure}

\subsubsection{Reaction-dominated regime (Test 2b)}

To explore this regime ($\mu\gg \theta$), we set $\theta = 0.1$ and $\mu = 1$. Now the choice of observables has significant (visual) impact on the predictive accuracy (\cref{fig:f13}). The Koopman operator theory helps explain this observation. Since the nonlinear source term dominates the dynamics, only the consistent observables can capture the eigenvalues and eigenfunctions of the Koopman operator.

\begin{figure}[tbhp]
 \centering
\includegraphics{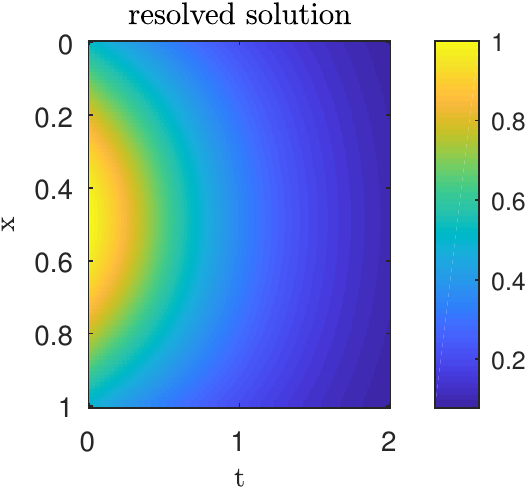}
\includegraphics{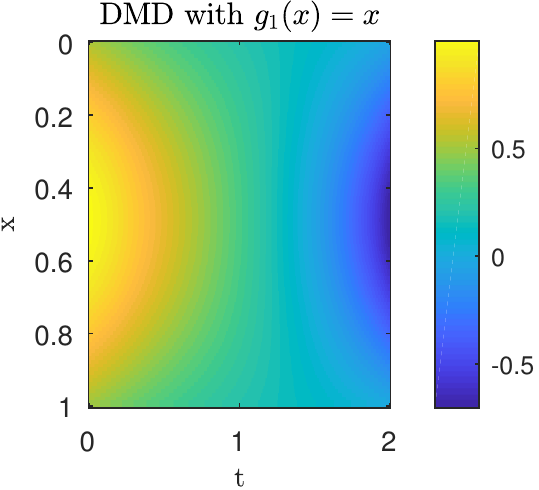}

\includegraphics{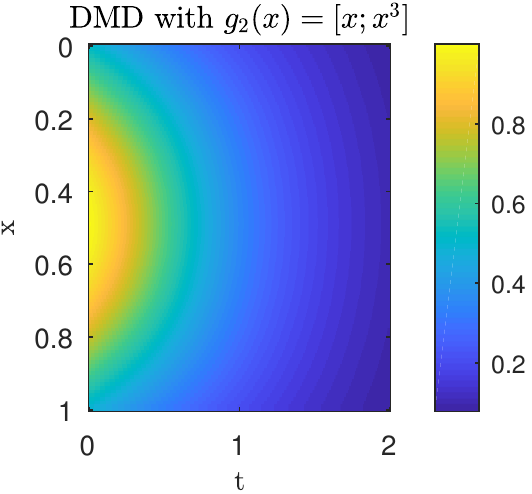}
\includegraphics{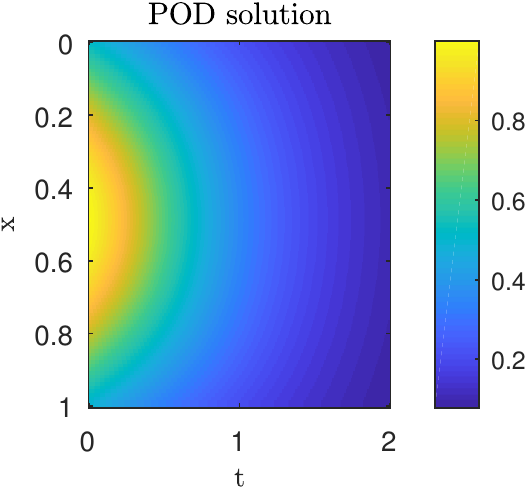}
\caption{Test 2b. Fully resolved solution $u(x,t)$ of the reaction-diffusion problem~\eqref{eq:rde} in the reaction-dominated regime, and its approximations obtained from $m=200$ snapshots with DMD (with two sets of observables $\mathbf g$) and POD-DEIM.}
\label{fig:f13}
\end{figure}

Errors of DMD prediction relying on the observables $g_1(u) = u$ and $\bold g_2(u) = (u, u^3)$ are shown in~\cref{fig:f14}. Our error estimation~\cref{thm:t3-7} indicates the failure of the DMD prediction based on the observable $g_1(u) = u$ and provides robust error bound for the DMD prediction based on  the observable $\bold g_2(u) = (u, u^3)$. For the same rank truncation criteria, the errors of POD and DMD with using $\bold g_2(u)$ are comparable, while that of DMD with $g_1(u)$ is orders of magnitude higher. 

\begin{figure}[tbhp]
\centering
\includegraphics{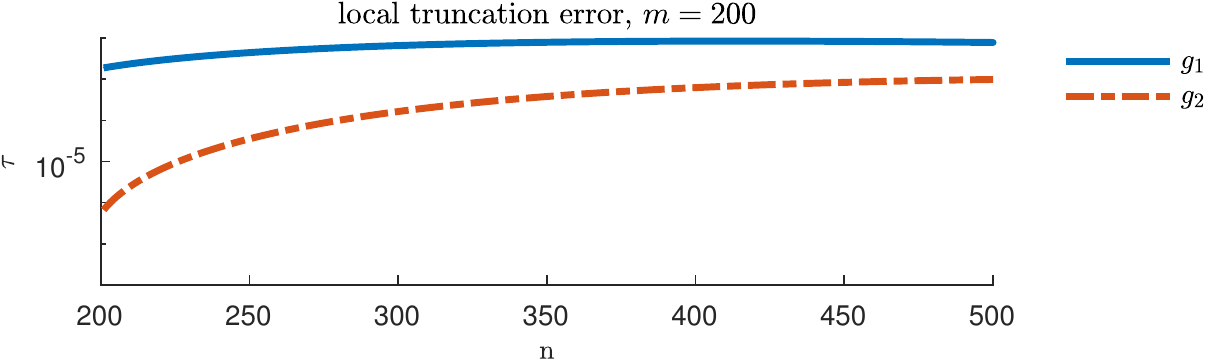}

\includegraphics{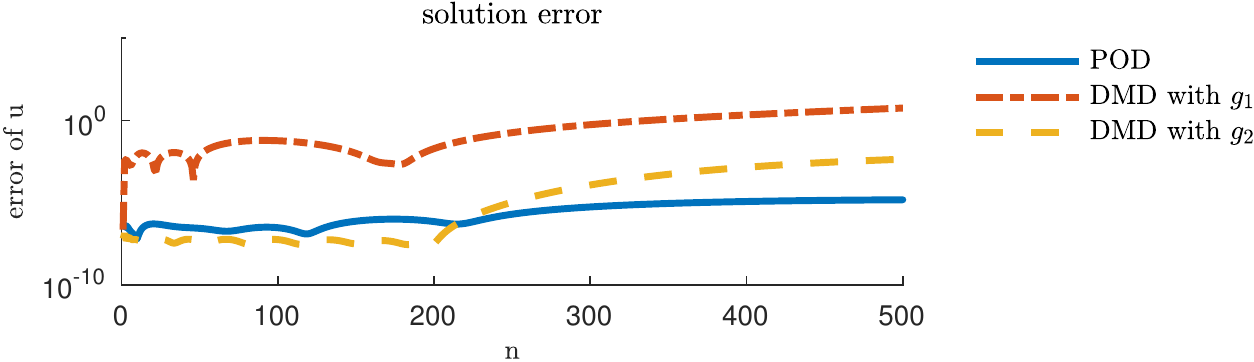}

\includegraphics{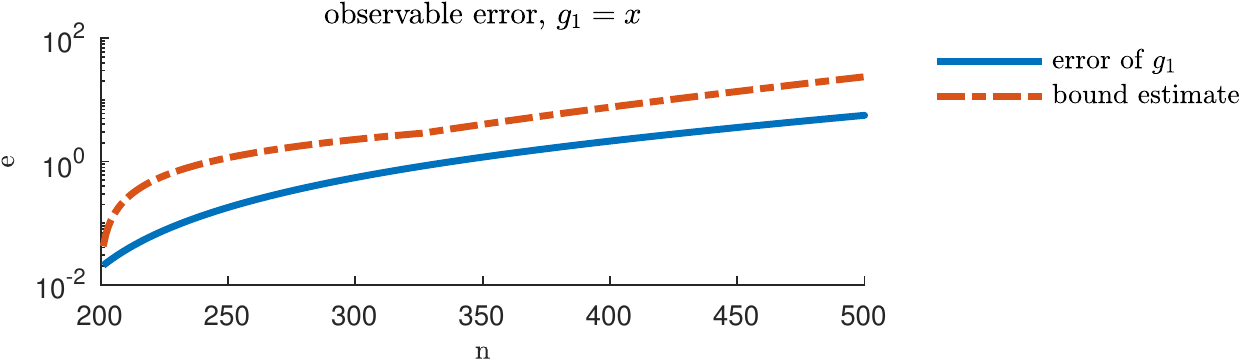}

\includegraphics{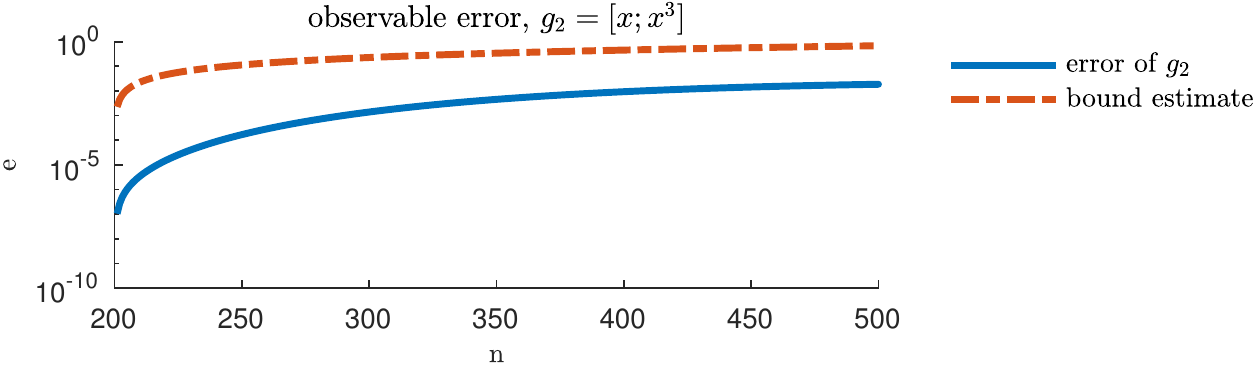}
\caption{Test 2b. Local truncation error; Comparison of POD and DMD errors of the solution; Global error (errors of the observables) for DMD prediction with observables $g_1$ and $\bold g_2$ using $m=200$ snapshots.}
\label{fig:f14}
\end{figure}

\subsubsection{Comparison of POD and DMD}
\label{sec:compare}
Comparison of the computation time and accuracy of DMD and POD-DEIM is presented in~\cref{table:t1} for Test~2b. The computational time comparison is made for the same rank truncation criteria. Note that the rank of the reduced-order model is different for DMD and POD because of the different dimension of the input data matrix. The ROM derived by DMD is in observable space and the ROM derived by POD is in state space. 

\begin{figure}[tbhp]
\centering
\includegraphics[width=\textwidth,clip=true, trim = 0mm 120mm 0mm 40mm]{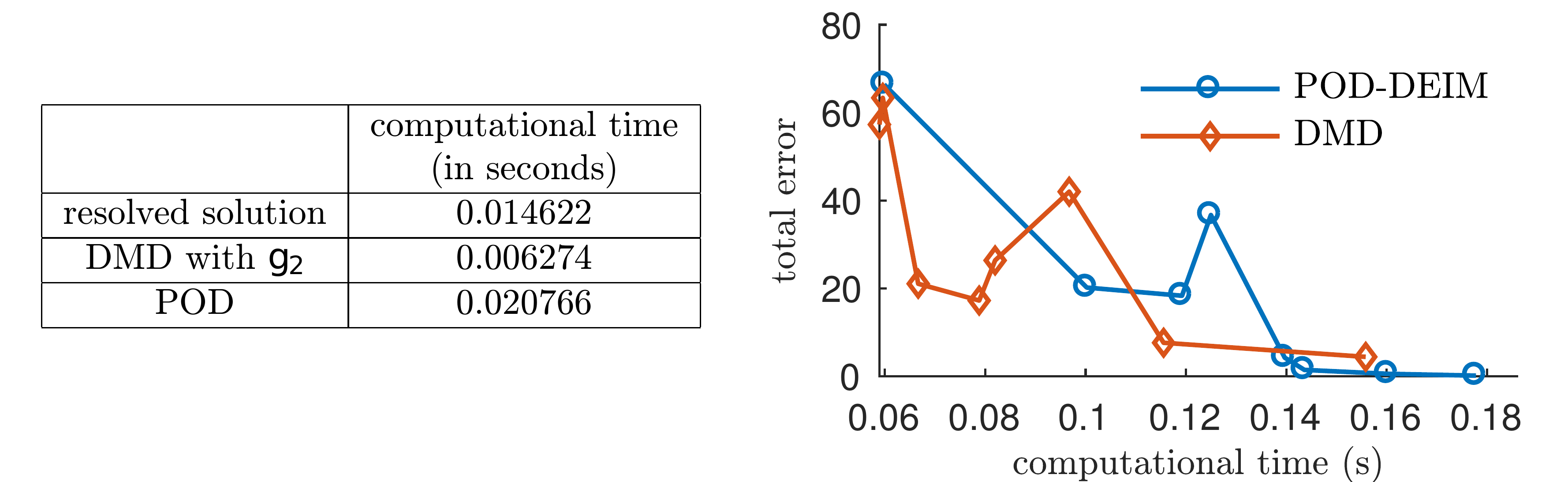}
\caption{Test 2b. Computational times of the fully resolved solution, POD-DEIM, and DMD with the observables $\mathbf g_2(u)$ (left table); Comparison of POD and DMD in terms of computational time and accuracy (right figure). \label{table:t1}}
\end{figure}

\cref{table:t1} demonstrates that DMD prediction is computationally efficient due to its iteration-free feature. POD, on the other hand, is computationally more expensive than the fully resolved solver because the computational cost saved by ROM in the prediction process does not compensate for the cost of establishing the ROM by SVD and DEIM. This would not be the case for higher dimensional problems and longer prediction times. However, being non-iterative, DMD would outperform POD on such problems as well.

%\begin{table}
%\captionsetup{labelformat=empty}
%\begin{minipage}{0.5\linewidth}
%\centering
%\caption{Table1}
%\label{table:t1}
%\begin{tabular}{|c|c|}
%\hline
% &computational time\\
% & (in seconds) \\
%\hline
%resolved solution & 0.014622  \\
%\hline
%DMD with $\bold g_2$ &0.006274\\
%\hline
%POD & 0.020766\\ 
%\hline
%\end{tabular}
%\end{minipage}
%\hfill
%\begin{minipage}{0.45\linewidth}
%\centering
%\includegraphics{Fig/Test2_2/compare}
%\end{minipage}
%\caption{Test 2b. Computational times of the fully resolved solution, POD-DEIM, and DMD with the observables $\mathbf g_2(u)$ (left table); Comparison of POD and DMD in terms of computational time and accuracy (right figure).}
%\label{compare}
%\end{table}

Both the accuracy and computational time depend on the rank of the ROM. The table in \cref{table:t1} reveals that POD has advantage in accuracy and DMD has advantage in efficiency. Thus, if one wants a fast prediction with slightly lower accuracy, then DMD is a better choice and vice versa.

\subsection{Nonlinear reaction-diffusion equation (Test 3)}
\label{sec:t3}
Consider a reaction-diffusion equation with the state-dependent diffusion coefficient,
\begin{subequations}\label{prob3}
\begin{align}\label{eq:test3}
\frac{\partial u}{\partial t} = \frac{\partial }{\partial x}\left(u \frac{\partial u}{\partial x} \right) - (u-u^3), \qquad 0 < x < 1, \quad t > 0.
\end{align}
It is subject to the initial and boundary conditions
\begin{align}
u(x,0)=0.5+0.5\sin(\pi x), \quad u(0,t) = 0, \quad u(1,t) = 0.
\end{align}
\end{subequations}
As discussed earlier, the Koopman operator theory suggests that only the physical-informed observables can capture the dynamical systems.  To identify the relevant observables, we use the Kirchhoff transformation to recast~\eqref{eq:test3} as
\begin{align}
\frac{\partial u}{\partial t} = \frac{\partial^2 \phi }{ \partial x^2} - (u-u^3), \qquad \phi = %\int_0^u u'du' = 
u^2/2.
\end{align}
This form suggests a set of observables $\mathbf g_2 = (u; u^2; u^3)$.

\begin{figure}[tbhp]
 \centering
\includegraphics{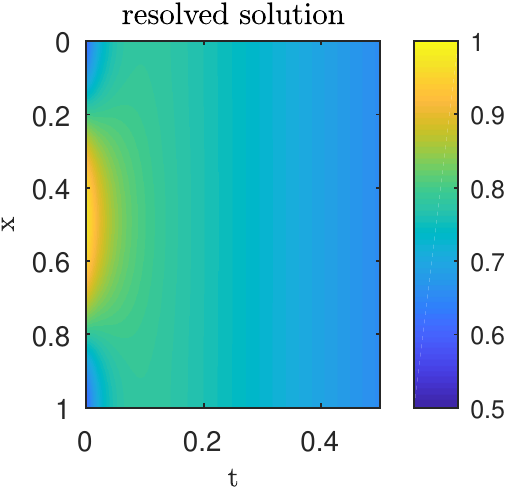}
\includegraphics{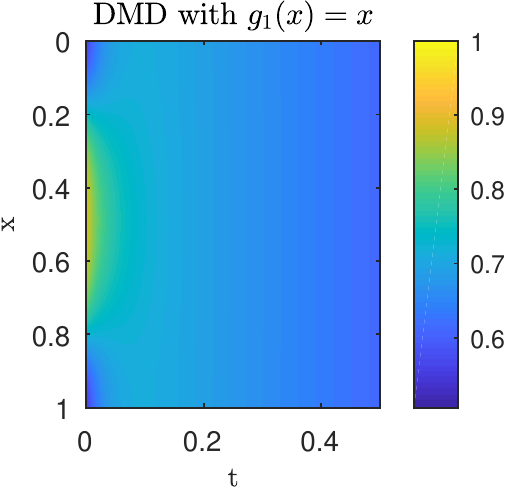}

\includegraphics{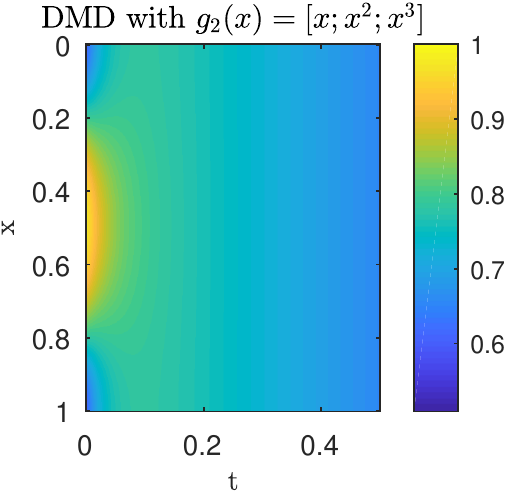}
\includegraphics{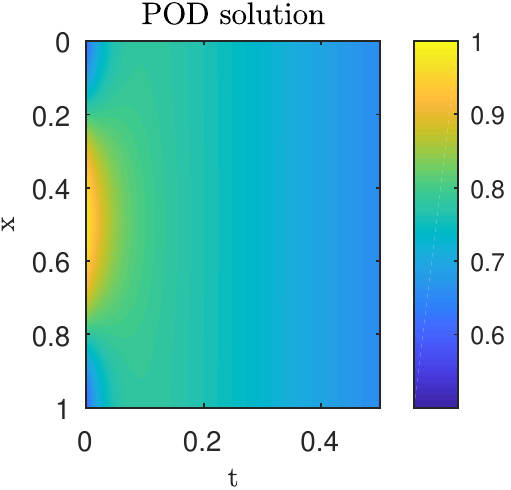}
\caption{Test 3. Fully resolved solution $u(x,t)$ of the nonlinear reaction-diffusion problem~\eqref{prob3} and its approximations obtained from $m=200$ snapshots with DMD (with two sets of observables $\mathbf g$) and POD-DEIM.}
\label{fig:f16}
\end{figure}

\Cref{fig:f16,fig:f17} provide the visual and quantitative comparison between the fully resolved solution $u(x,t)$ and its POD and DMD approximations. The performance of these approximators on this highly nonlinear problem is qualitatively similar to its weakly nonlinear counterpart analyzed in \cref{sec:t2}. For the inadequate choice of observables, $g_1 = u$, our error bound diverges from the true error because of the fast decay of both the reference and wrong solutions. Nevertheless, the error bound still serves as a good discriminator between the accurate or inaccurate predictions. For the proper choice of observables, $\mathbf g_2 = (u; u^2; u^3)$, our error bound remains accurate.

\begin{figure}[tbhp]
\centering
\includegraphics{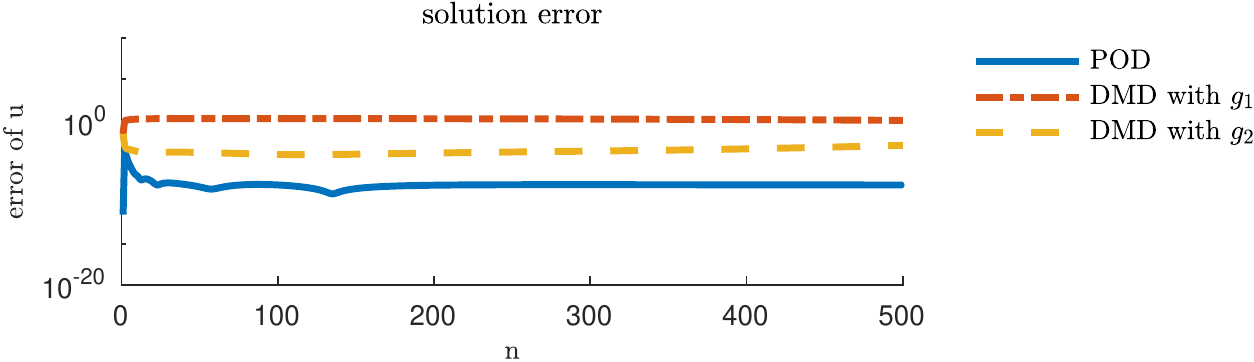}

\includegraphics{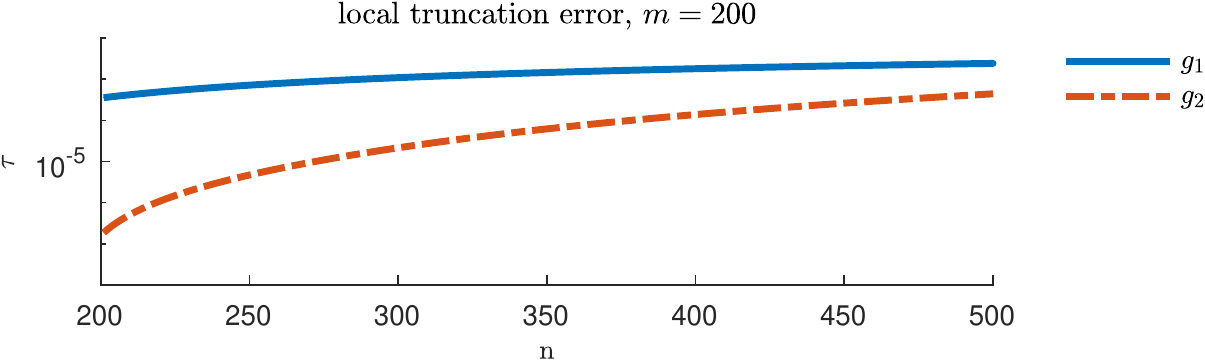}

\includegraphics{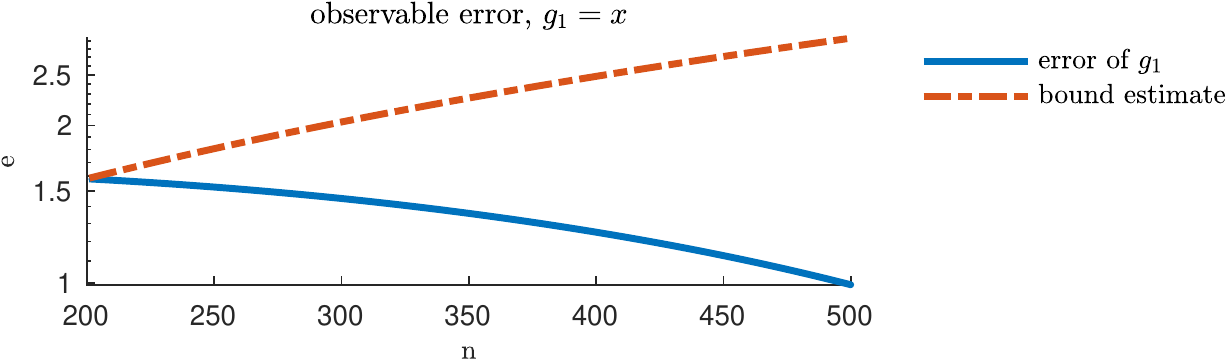}

\includegraphics{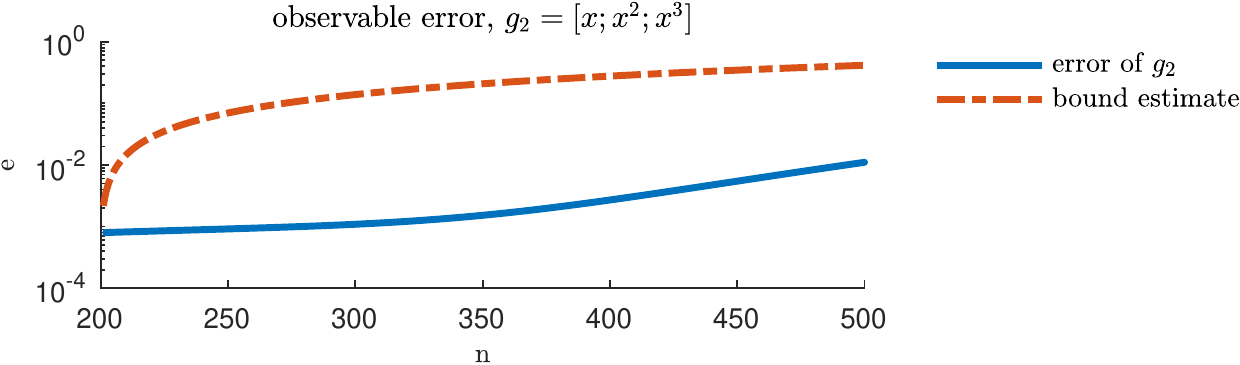}
\caption{Test 3. Comparison of POD and DMD errors; Local truncation error and global error for DMD prediction with $\bold g_1$ and $\bold g_2$ using $m=200$ snapshots.}
\label{fig:f17}
\end{figure}

\subsection{Nonlinear Schr\"odinger equation (Test 4)}
\label{sec:t4}
Finally, we consider the nonlinear Schr\"odinger  equation, 
\begin{equation}
\left\{
\begin{aligned}
&i\frac{\partial q}{\partial t}+\frac{1}{2}\frac{\partial^2 q}{\partial \xi^2}+|q|^2q = 0, \\
&q(x,0)=2 \text{sech}(x).
\end{aligned}
\right.
\end{equation}
It belongs to the the general class of nonlinear parabolic PDEs~\cref{eq:2-1} and satisfies all of the assumptions underlying our error estimator. The reference solution is obtained by using Fast Fourier Transform in space and Runge-Kutta in time evolution. 

\begin{figure}[tbhp]
 \centering
\includegraphics{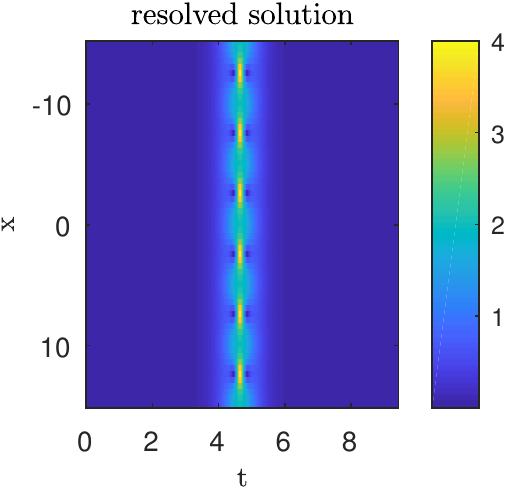}
\includegraphics{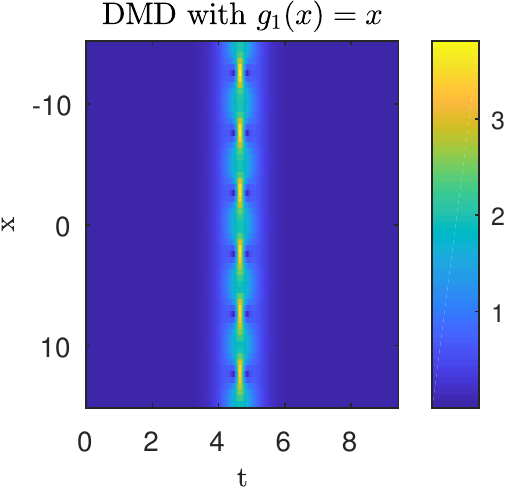}

\includegraphics{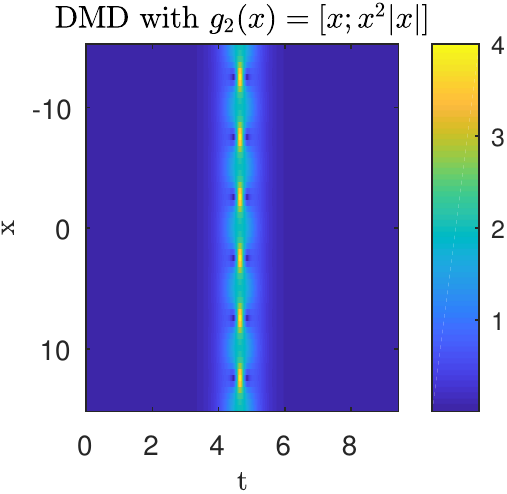}
\includegraphics{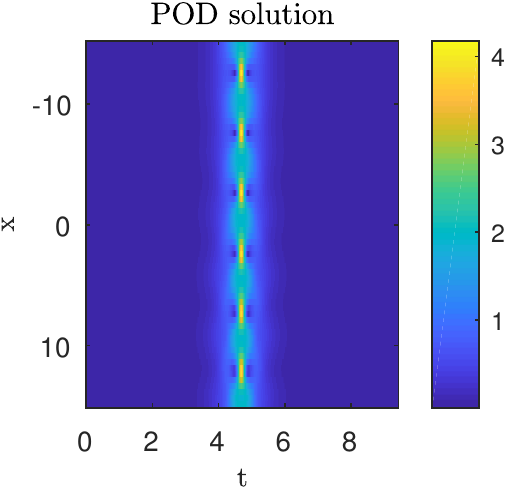}
\caption{Test 4. Resolved solution, DMD solutions and POD solution using $m=20$ snapshots; Comparison of POD and DMD errors.}
\label{fig:f18}
\end{figure}

We reproduce the results reported in~\cite{kutzbook} and use them to verify our error bound in~\cref{fig:f18,fig:f19}. In this case, DMD with the right observable has better performance, in terms of both accuracy and efficiency, than POD. The advantage of taking physical information into account is tremendous.

\begin{figure}[tbhp]
\centering
\includegraphics{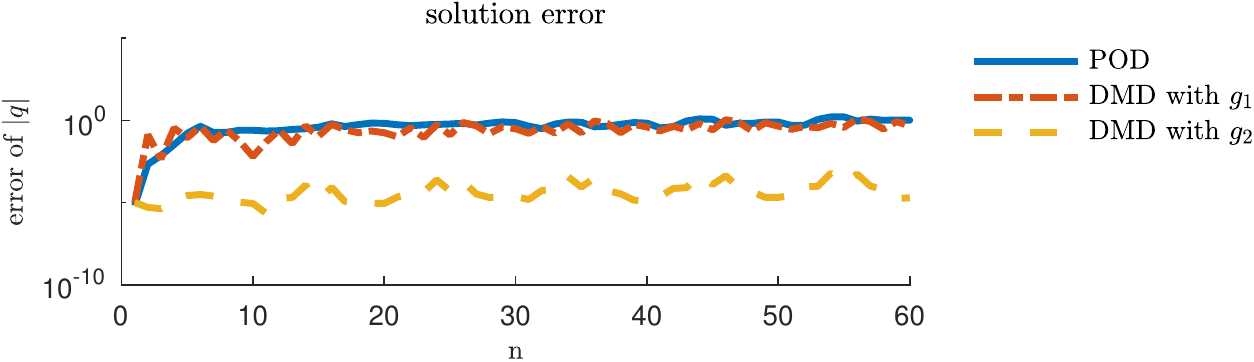}

\includegraphics{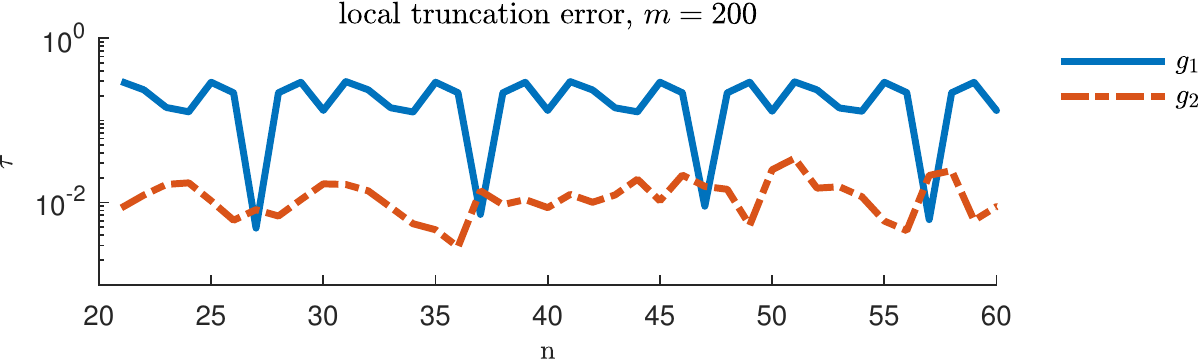}

\includegraphics{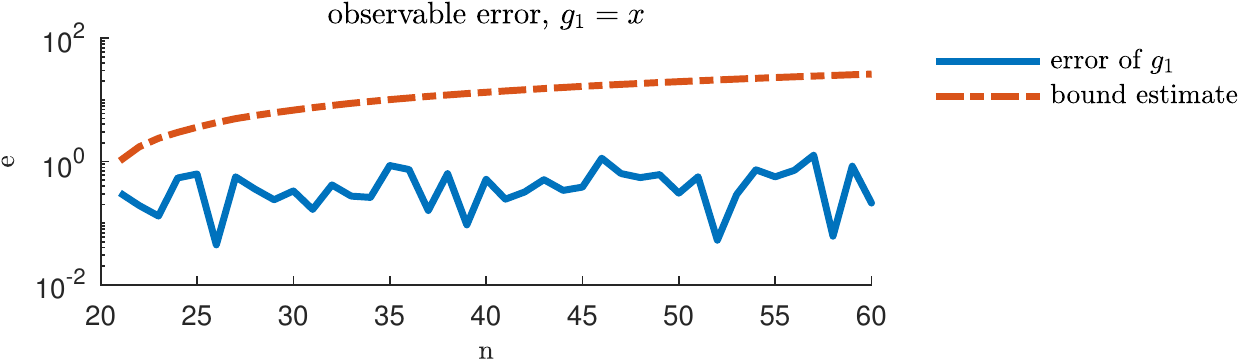}

\includegraphics{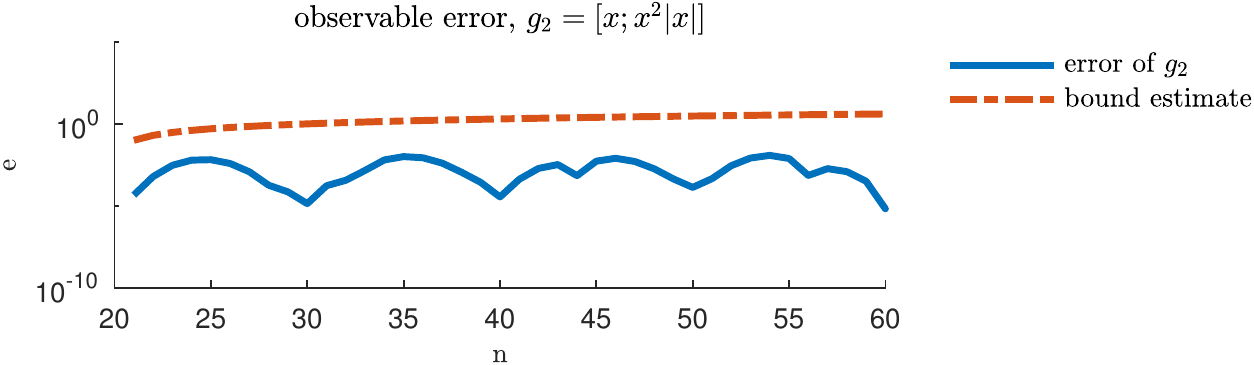}
\caption{Test 4. Comparison of POD and DMD errors; Local truncation error and global error for DMD prediction with $\bold g_1$ and $\bold g_2$ using $m=200$ snapshots.}
\label{fig:f19}
\end{figure}

\section{Conclusion and Future outlook}
\label{sec:con}

We derived error bounds of DMD predictions for linear and nonlinear parabolic PDEs and verified their accuracy on four computational examples of increasing degree of complexity. Our analysis leads to the following major conclusions.
\begin{enumerate}
\item When combined with an adequate choice observables, the Koopman operator maps the nonlinear underlying dynamics with the linear observable space, where DMD algorithm can be implemented with good accuracy and efficiency. 

\item In the extrapolation (predictive) mode, DMD outperforms other ROM-based method (e.g., POD) in terms of computational efficiency, because it requires no iteration. At the same time, POD has higher predictive accuracy than DMD.

%\item By approximating the eigenvalues and eigenfuncations of the Koopman operator in a finite subspace, solution can be approximated in an analytic form. We verified the accuracy and efficiency of DMD in prediction by several numerical tests. 

\item Our error estimator is consistent with previous theoretic understanding of DMD algorithm and the Koopman operator theory. More importantly, it provides a quantitative measure of the accuracy of DMD predictions. 
\end{enumerate}
In the follow-up studies we will used  our error estimators of DMD predictions to address several challenges in scientific computing:
\begin{enumerate}
\item For PDEs with random coefficients, e.g., for PDE-based models of flow and transport in (randomly) heterogeneous porous media, DMD predictions with quantitative error bounds might provide a means for accelerating computational expensive Monte Carlo and multiscale simulations.
%Significant improvements in efficiency have been observed in numerical tests of an ongoing project~\cite{boso2019}. 
\item Our error estimators can be used to guide the design of hybrid algorithms that combine DMD predictions with fully resolved solutions of multi-dimensional complex problems. 

\item It might be possible to generalize our results to a broader context of advection-diffusion equations. Multiresolution DMD (mrDMD), instead of DMD, can be used to overcome the translational invariant issues in advection. %Data-driven techniques as DMD is becoming more and more powerful tools in modern scientific computing including multiscale problems.
\end{enumerate}

\bibliographystyle{plain}
%\bibliography{DMD_error_analysis}

\begin{thebibliography}{10}

\bibitem{alla2017nonlinear}
Alessandro Alla and J~Nathan Kutz.
\newblock Nonlinear model order reduction via dynamic mode decomposition.
\newblock {\em SIAM J. Sci. Comput.}, 39(5):B778--B796, 2017.

\bibitem{barrault2004empirical}
M.~Barrault, Y.~Maday, N.~C. Nguyen, and A.~T. Patera.
\newblock An `empirical interpolation' method: application to efficient
  reduced-basis discretization of partial differential equations.
\newblock {\em Comptes Rendus Mathematique}, 339(9):667--672, 2004.

\bibitem{brunton2016extracting}
B.~W. Brunton, L.~A. Johnson, J.~G. Ojemann, and J.~N. Kutz.
\newblock Extracting spatial-temporal coherent patterns in large-scale neural
  recordings using dynamic mode decomposition.
\newblock {\em J. Neurosci. Meth.}, 258:1--15, 2016.

\bibitem{brunton2016discovering}
S.~L. Brunton, J.~L. Proctor, and J.~N. Kutz.
\newblock Discovering governing equations from data by sparse identification of
  nonlinear dynamical systems.
\newblock {\em Proc. Natl. Acad. Sci. U.S.A.}, page 201517384, 2016.

\bibitem{brunton2015compressed}
S.~L. Brunton, J.~L. Proctor, J.~H. Tu, and J.~N. Kutz.
\newblock Compressed sensing and dynamic mode decomposition.
\newblock {\em J. Comput. Dyn.}, 2(165):2158--2491, 2015.

\bibitem{brunton2016koopman}
Steven~L Brunton, Bingni~W Brunton, Joshua~L Proctor, and J~Nathan Kutz.
\newblock Koopman invariant subspaces and finite linear representations of
  nonlinear dynamical systems for control.
\newblock {\em PloS One}, 11(2):e0150171, 2016.

\bibitem{chaturantabut2010nonlinear}
Saifon Chaturantabut and Danny~C Sorensen.
\newblock Nonlinear model reduction via discrete empirical interpolation.
\newblock {\em SIAM J. Sci. Comput.}, 32(5):2737--2764, 2010.

\bibitem{dawson2016characterizing}
S.~T.~M. Dawson, M.~S. Hemati, M.~O. Williams, and C.~W. Rowley.
\newblock Characterizing and correcting for the effect of sensor noise in the
  dynamic mode decomposition.
\newblock {\em Exper. Fluids}, 57(3):42, 2016.

\bibitem{drmac2018data}
Z.~Drmac, I.~Mezi{\'c}, and R.~Mohr.
\newblock Data driven modal decompositions: analysis and enhancements.
\newblock {\em SIAM J. Sci. Comput.}, 40(4):A2253--A2285, 2018.

\bibitem{duke2012error}
D.~Duke, J.~Soria, and D.~Honnery.
\newblock An error analysis of the dynamic mode decomposition.
\newblock {\em Exper. Fluids}, 52(2):529--542, 2012.

\bibitem{kerschen2005method}
G.~Kerschen, J.-C. Golinval, A.~F. Vakakis, and L.~A. Bergman.
\newblock The method of proper orthogonal decomposition for dynamical
  characterization and order reduction of mechanical systems: an overview.
\newblock {\em Nonlinear Dyn.}, 41(1-3):147--169, 2005.

\bibitem{kevrekidis2003equation}
I.~G. Kevrekidis, C.~W. Gear, J.~M. Hyman, P.~G. Kevrekidid, O.~Runborg,
  C.~Theodoropoulos, et~al.
\newblock Equation-free, coarse-grained multiscale computation: {E}nabling
  mocroscopic simulators to perform system-level analysis.
\newblock {\em Comm. Math. Sci.}, 1(4):715--762, 2003.

\bibitem{koopman1931hamiltonian}
B.~O. Koopman.
\newblock Hamiltonian systems and transformation in {H}ilbert space.
\newblock {\em Proc. Natl. Acad. Sci. U.S.A.}, 17(5):315--318, 1931.

\bibitem{korda2018convergence}
M.~Korda and I.~Mezi{\'c}.
\newblock On convergence of extended dynamic mode decomposition to the
  {K}oopman operator.
\newblock {\em J. Nonlinear Sci.}, 28(2):687--710, 2018.

\bibitem{kutzbook}
J.~N. Kutz, S.~L. Brunton, B.~W. Brunton, and J.~L. Proctor.
\newblock {\em Dynamic mode decomposition: data-driven modeling of complex
  systems}, volume 149.
\newblock SIAM, Philadelphia, PA, 2016.

\bibitem{kutz2016multiresolution}
J.~N. Kutz, X.~Fu, and S.~L. Brunton.
\newblock Multiresolution dynamic mode decomposition.
\newblock {\em SIAM J. Appl. Dyn. Syst.}, 15(2):713--735, 2016.

\bibitem{kutz2016dynamic}
J.~N. Kutz, J.~Grosek, and S.~L. Brunton.
\newblock Dynamic mode decomposition for robust pca with applications to
  foreground/background subtraction in video streams and multi-resolution
  analysis.
\newblock In {\em CRC Handbook on Robust Low-Rank and Sparse Matrix
  Decomposition: Applications in Image and Video Processing}. CRC, 2016.

\bibitem{li2017extended}
Q.~Li, F.~Dietrich, E.~M. Bollt, and I.~G. Kevrekidis.
\newblock Extended dynamic mode decomposition with dictionary learning: {A}
  data-driven adaptive spectral decomposition of the {K}oopman operator.
\newblock {\em Chaos}, 27(10):103111, 2017.

\bibitem{mann2016dynamic}
J.~Mann and J.~N. Kutz.
\newblock Dynamic mode decomposition for financial trading strategies.
\newblock {\em Quant. Finance}, 16(11):1643--1655, 2016.

\bibitem{mezic2013analysis}
I.~Mezi{\'c}.
\newblock Analysis of fluid flows via spectral properties of the {K}oopman
  operator.
\newblock {\em Annu. Rev. Fluid Mech.}, 45:357--378, 2013.

\bibitem{proctor2016dynamic}
J.~L. Proctor, S.~L. Brunton, and J.~N. Kutz.
\newblock Dynamic mode decomposition with control.
\newblock {\em SIAM J. Appl. Dyn. Syst.}, 15(1):142--161, 2016.

\bibitem{proctor2015discovering}
J.~L. Proctor and P.~A. Eckhoff.
\newblock Discovering dynamic patterns from infectious disease data using
  dynamic mode decomposition.
\newblock {\em Int. Health}, 7(2):139--145, 2015.

\bibitem{rowley2005model}
C.~W. Rowley.
\newblock Model reduction for fluids, using balanced proper orthogonal
  decomposition.
\newblock {\em Int. J. Bifurc. Chaos}, 15(03):997--1013, 2005.

\bibitem{rowley2000reconstruction}
C.~W. Rowley and J.~E. Marsden.
\newblock Reconstruction equations and the {Karhunen-Lo{\`e}ve} expansion for
  systems with symmetry.
\newblock {\em Physica D}, 142(1-2):1--19, 2000.

\bibitem{rowley2009spectral}
C.~W. Rowley, I.~Mezi{\'c}, S.~Bagheri, P.~Schlatter, and D.~S. Henningson.
\newblock Spectral analysis of nonlinear flows.
\newblock {\em J. Fluid Mech.}, 641:115--127, 2009.

\bibitem{rudy2017data}
S.~H. Rudy, S.~L. Brunton, J.~L. Proctor, and J.~N. Kutz.
\newblock Data-driven discovery of partial differential equations.
\newblock {\em Science Adv.}, 3(4):e1602614, 2017.

\bibitem{schmid2010dynamic}
P.~J. Schmid.
\newblock Dynamic mode decomposition of numerical and experimental data.
\newblock {\em J. Fluid Mech.}, 656:5--28, 2010.

\bibitem{schmidt2009distilling}
M.~Schmidt and H.~Lipson.
\newblock Distilling free-form natural laws from experimental data.
\newblock {\em Science}, 324(5923):81--85, 2009.

\bibitem{tartakovsky-2003-stochastic}
D.~M. Tartakovsky, A.~Guadagnini, and M.~Riva.
\newblock Stochastic averaging of nonlinear flows in heterogeneous porous
  media.
\newblock {\em J. Fluid Mech.}, 492:47--62, 2003.

\bibitem{tartakovsky1999conditional}
Daniel~M Tartakovsky, Shlomo~P Neuman, and Zhiming Lu.
\newblock Conditional stochastic averaging of steady state unsaturated flow by
  means of {K}irchhoff transformation.
\newblock {\em Water Resour. Res.}, 35(3):731--745, 1999.

\bibitem{thomee1984galerkin}
Vidar Thom{\'e}e.
\newblock {\em Galerkin finite element methods for parabolic problems}, volume
  1054.
\newblock Springer, 1984.

\bibitem{tu2014spectral}
J.~H. Tu, C.~W. Rowley, J.~N. Kutz, and J.~K. Shang.
\newblock Spectral analysis of fluid flows using sub-{N}yquist-rate {PIV} data.
\newblock {\em Exper. Fluids}, 55(9):1805, 2014.

\bibitem{tu2013dynamic}
Jonathan~H Tu, Clarence~W Rowley, Dirk~M Luchtenburg, Steven~L Brunton, and
  J~Nathan Kutz.
\newblock On dynamic mode decomposition: theory and applications.
\newblock {\em arXiv preprint arXiv:1312.0041}, 2013.

\bibitem{wang2011predicting}
W.-X. Wang, R.~Yang, Y.-C. Lai, V.~Kovanis, and C.~Grebogi.
\newblock Predicting catastrophes in nonlinear dynamical systems by compressive
  sensing.
\newblock {\em Phys. Rev. Lett.}, 106(15):154101, 2011.

\bibitem{williams2015data}
M.~O. Williams, I.~G. Kevrekidis, and C.~W. Rowley.
\newblock A data-driven approximation of the {K}oopman operator: {E}xtending
  dynamic mode decomposition.
\newblock {\em J. Nonlinear Sci.}, 25(6):1307--1346, 2015.

\bibitem{williams2013hybrid}
Matthew~O Williams, Peter~J Schmid, and J~Nathan Kutz.
\newblock Hybrid reduced-order integration with proper orthogonal decomposition
  and dynamic mode decomposition.
\newblock {\em Multiscale Model. Simul.}, 11(2):522--544, 2013.

\end{thebibliography}

\end{document}